\renewcommand{\div}{\mbox{div}}
\DeclareFontFamily{OT1}{rsfs}{}
\DeclareFontShape{OT1}{rsfs}{m}{n}{ <-7> rsfs5 <7-10> rsfs7 <10-> rsfs10}{}
\DeclareMathAlphabet{\mathscr}{OT1}{rsfs}{m}{n}
\newcommand{\eq}[1]{\eqref{#1}}
\newcommand{\bel}[1]{\begin{equation}\label{#1}}
\newcommand{\beal}[1]{\begin{eqnarray}\label{#1}}
\newcommand{\beadl}[1]{\begin{deqarr}\label{#1}}
\newcommand{\eeadl}[1]{\arrlabel{#1}\end{deqarr}}
\newcommand{\eeal}[1]{\label{#1}\end{eqnarray}}
\newcommand{\eead}[1]{\end{deqarr}}
\newcommand{\eea}{\end{eqnarray}}
\newcommand{\eeaa}{\end{eqnarray*}}
\newcommand{\be}{\begin{equation}}
\newcommand{\ee}{\end{equation}}
\DeclareFontFamily{OT1}{rsfs}{}
\DeclareFontShape{OT1}{rsfs}{m}{n}{ <-7> rsfs5 <7-10> rsfs7 <10->
rsfs10}{} \DeclareMathAlphabet{\mycal}{OT1}{rsfs}{m}{n}
\newcounter{mnotecount}[section]
\newcommand{\N}{{\Bbb N}}
\newcommand{\rmnote}[1]{}
\newcommand{\Ric}{\operatorname{Ric}}
\def\mysavedown#1{\edef\mysubs{\mysubs#1}}
\def\mysaveup#1{\edef\mysups{\mysups#1}}
\def\mydown#1{{\mytensor}_{\vphantom{\mysubs}#1}}
\def\myup#1{{\mytensor}^{\vphantom{\mysups}#1}}
\def\tensor#1#2{
  #1
  \def\mytensor{\vphantom{#1}}
  \def\mysubs{\relax}
  \def\mysups{\relax}
  \let\down=\mysavedown
  \let\up=\mysaveup
  #2
  \let\down=\mydown
  \let\up=\myup
  #2
  }
\newcommand{\Riem}{\operatorname{Riem}}
\newcommand{\Hess}{\operatorname{Hess}}
\newcommand{\Tr}{\operatorname{Tr}}
\newcommand{\R}{\mathbb R}
\renewcommand{\div}{\operatorname{div}}
\DeclareMathOperator{\Vol}{Vol}
\renewcommand{\phi}{\varphi}
\renewcommand{\epsilon}{\varepsilon}
\def\crn#1#2{{\vcenter{\vbox{
        \hbox{\kern#2pt \vrule width.#2pt height#1pt
           }
          \hrule height.#2pt}}}}
\newcommand{\Ein}{\operatorname{Ein}}
\renewcommand{\hbar}{{\overline h}}
\newcommand{\pre}[2]{{{\vphantom{#2}}^{#1}}\kern-.2ex{#2}}
\theoremstyle{plain}
\newtheorem{theorem}{Theorem}[section]
\newtheorem{proposition}[theorem]{Proposition}
\newtheorem{corollary}[theorem]{Corollary}
\theoremstyle{definition}
\newtheorem{definition}[theorem]{Definition}
\newtheorem{remark}[theorem]{Remark}
\numberwithin{equation}{section}
\begin{document}
\title[ Ricci curvature near Einstein manifolds with boundary] 
{Prescribed Ricci curvature near an Einstein manifold with boundary}

\author[E. Delay]{Erwann Delay}
\address{Laboratoire de Math\'ematiques d'Avignon,
 Fac. des Sciences,
 F84916 Avignon, France\newline
$\mbox{ }\;\;$F.R.U.M.A.M. – CNRS Aix Marseille Université– F13 331 Marseille, France}
\date{March 24, 2025}
\email{Erwann.Delay@univ-avignon.fr}
\urladdr{https://erwanndelay.wordpress.com/}

\begin{abstract}
Let $(M,g)$ be a compact Einstein Riemannian manifold with boundary. 
We show that under certain conditions, the map that associates to a metric on $M$ its Ricci curvature, its induced conformal class on the boundary, and its mean curvature on the boundary is locally invertible near $g$.
The contravariant Ricci operator, as well as other operators such as the Einstein operator, are also studied.
\end{abstract}

\maketitle

\noindent {\bf Keywords}: Ricci curvature, Einstein metrics,
symmetric 2-tensors, quasi-linear elliptic PDE.
\\
\newline
{\bf 2010 MSC}: 53C21, 53A45, 58J05, 58J37, 35J62.
\\
\newline

\tableofcontents

\section{Introduction}\label{section:intro}
On a Riemannian manifold $(M,g)$, we denote by $\Ric(g)$ its Ricci curvature.
We first consider the (field of) geometric symmetric 2-tensors of the form
$$
\Ric_\Lambda(g):=\Ric(g)+\Lambda g,
$$
where $\Lambda$ is a constant.
This tensor is geometrically natural in the sense that for any sufficiently regular diffeomorphism $\varphi$,
$$
\varphi^*\Ric_\Lambda(g)=\Ric_\Lambda(\varphi^*g).
$$
Here, we address the problem of inverting the operator $\Ric_\Lambda$.
Given a symmetric tensor field $R_\Lambda$ on $M$, we seek a Riemannian metric $g$ such that
\bel{mainequation}
\Ric_\Lambda(g)=R_\Lambda.
\ee
This requires solving a particularly complex quasi-linear system.

The prescribed Ricci curvature problem dates back to the 1980s.
DeTurck \cite{Deturck:ricci}, in 1981, first proved a local existence result near a point $p$ in $\mathbb{R}^n$ under the intrinsic assumption that the matrix of $R(p)$ is invertible (see also \cite{Deturckrank1}).

Subsequently, global results were obtained: Hamilton \cite{Hamilton1984}, in 1984, treated the case of the unit sphere in $\mathbb{R}^{n+1}$ (with $n>2$), proving a local inversion result near the standard metric.

Such local inversion techniques were later adapted to certain Einstein manifolds \cite{Delay:etude}, \cite{DelayHerzlich}, \cite{DeturckEinstein}, \cite{Delay:study}, \cite{Delanoe2003}, \cite{Delay:ricciAE}, and then to parallel Ricci manifolds \cite{Delay:RicciPara}, \cite{Delay:SSR}. The problem of prescribed Ricci curvature has also been studied on Lie groups and homogeneous spaces with completely different methods (see \cite{Gaskins} and references therein).

Obstruction results to the inversion of Ricci curvature also exist \cite{Deturck-Koiso}, \cite{Baldes1986}, \cite{Hamilton1984}, \cite{Delanoe1991}, \cite{Delay:etude}.

So far, the only results concerning manifolds with boundary is local near a boundary point \cite{Pulemotov:RicciBord} or for special domains of some cohomogeneity one
manifolds \cite{Pulemotov:Dirichlet}. Recent works on Einstein metrics on manifolds with boundary, such as \cite{ANandHUANG2024}, have inspired us to obtain {\it global} results on $M$ near an Einstein metric.

To illustrate our results simply, we state a particular case.
The induced boundary metric is denoted by $g^T$, and the mean curvature of the boundary by $H(g)$.

\begin{theorem}\label{maintheorem} 
Let $(M,g)$ be a smooth compact Einstein Riemannian manifold with boundary, with $\Ric(g)=\lambda g$. Suppose that $\lambda+\Lambda\neq0$ and that $-2\Lambda$ is not in the spectrum of the Hodge Laplacian acting on 1-forms with Dirichlet condition, nor in the spectrum of the Lichnerowicz Laplacian with ADN conditions.
Let $k\in\mathbb{N}\backslash\{0\}$ and $\alpha\in(0,1)$, 
then for every $r\in C^{k+2,\alpha}(M,\mathcal{S}_2)$ close to zero, every conformal class $[\gamma]$ close to $[g^T]$ in $ [C^{k+2,\alpha}(\partial M,S_2^+)]$, and every function $\mathcal{H}$ close to $H(g)$ in $C^{k+1,\alpha}(\partial M)$, there exists a unique $h$ close to zero in $C^{k+2,\alpha}(M,\mathcal{S}_2)$ such that
$$
\left\{
\begin{array}{ll}
\Ric_\Lambda(g+h)=\Ric_\Lambda(g)+r & \text{ in } M ,\\
 \mbox{}[(g+h)^T]=[\gamma] & \text{ on } \partial M,\\
H(g+h)=\mathcal{H} & \text{ on } \partial M.
\end{array}
\right.
$$
Moreover, the map $(r,[\gamma],\mathcal{H})\mapsto h$ is smooth from a neighborhood of $(0,[g^T], H(g))$ to a neighborhood of zero in the corresponding Banach spaces. 
\end{theorem}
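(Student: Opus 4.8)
The plan is to recast the boundary-value problem as an implicit function theorem statement and to overcome the diffeomorphism-invariance degeneracy of $\Ric_\Lambda$ by the classical DeTurck trick, adapted to the manifold-with-boundary setting. First I would introduce the DeTurck-gauged operator: fix the background metric $g$ and consider, for a metric $\tilde g$ near $g$, the vector field $W(\tilde g,g)$ whose components are $\tilde g^{ij}(\Gamma(\tilde g)_{ij}^k - \Gamma(g)_{ij}^k)$, and replace \eqref{mainequation} by the modified equation
\[
\Ric_\Lambda(\tilde g) + \tfrac12 \mathcal{L}_{W(\tilde g,g)^\sharp}\,\tilde g = R_\Lambda ,
\]
whose linearization at $\tilde g = g$ is (up to a constant factor and lower-order terms) $\tfrac12\Delta_L h + \Lambda h$, where $\Delta_L$ is the Lichnerowicz Laplacian. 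This is now an elliptic (not merely quasi-linear degenerate) second-order operator on symmetric $2$-tensors, and one must check that solutions of the gauged equation with the prescribed boundary data actually satisfy $W \equiv 0$, hence solve the original system — this is where the Dirichlet condition on $1$-forms and the hypothesis on the spectrum of the Hodge Laplacian enter.

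The second ingredient is the choice of boundary conditions that make the gauged operator an isomorphism. For the conformal-class condition $[(g+h)^T]=[\gamma]$ one splits $h^T$ into its trace-free part (which carries the conformal data) and its trace part (which is free); together with the mean-curvature condition $H(g+h)=\mathcal H$ and the gauge conditions needed to kill $W$ on $\partial M$, these assemble into an Agmon–Douglis–Nirenberg (ADN) elliptic boundary system for $\tfrac12\Delta_L + \Lambda$. I would verify the Lopatinski–Shapiro (complementing) condition for this system — presumably this is done in an earlier section of the paper, or follows from the references on Einstein metrics with boundary — so that the linearized operator
\[
P\colon C^{k+2,\alpha}(M,\mathcal S_2)\longrightarrow C^{k,\alpha}(M,\mathcal S_2)\times [C^{k+2,\alpha}(\partial M, S_2^+)]\times C^{k+1,\alpha}(\partial M)
\]
is Fredholm of index zero. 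The two spectral hypotheses ($-2\Lambda$ avoiding the Dirichlet Hodge-Laplacian spectrum and the Lichnerowicz-Laplacian-with-ADN spectrum) are exactly what forces injectivity, hence, by the index-zero property, surjectivity: $P$ is a Banach-space isomorphism.

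With $P$ an isomorphism, the conclusion follows from the inverse function theorem applied to the smooth map $(h;r,[\gamma],\mathcal H)\mapsto\bigl(\Ric_\Lambda(g+h)+\tfrac12\mathcal L_{W^\sharp}(g+h)-\Ric_\Lambda(g)-r,\ [(g+h)^T]-[\gamma],\ H(g+h)-\mathcal H\bigr)$ between the appropriate \holder spaces; smoothness of this map in $h$ and in the data is routine since $\Ric$, $H$, and the DeTurck vector field depend smoothly (indeed analytically in the relevant sense) on the coefficients of the metric and their derivatives. This produces a unique small $h$ solving the gauged system together with the prescribed boundary data, and depending smoothly on $(r,[\gamma],\mathcal H)$. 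The final step — and the one I expect to be the genuine obstacle — is the \emph{gauge-elimination argument}: showing that this $h$ in fact satisfies $\Ric_\Lambda(g+h)=\Ric_\Lambda(g)+r$ on the nose, i.e. that $W(g+h,g)=0$. The usual mechanism is to take the divergence (with respect to $g+h$) of the gauged equation and use the contracted second Bianchi identity to derive a linear elliptic equation for the $1$-form $W^\flat$; the prescribed boundary data must be shown to force homogeneous Dirichlet data for this equation, and then the assumption that $-2\Lambda$ is not a Dirichlet eigenvalue of the Hodge Laplacian on $1$-forms gives $W\equiv 0$. Making the boundary bookkeeping here consistent — in particular checking that the conformal and mean-curvature conditions, rather than a full Dirichlet condition on $h^T$, still pin down enough of $W$ on $\partial M$ — is the delicate point, and I would expect the bulk of the technical work of the paper to be concentrated there and in the verification of the ADN/Lopatinski–Shapiro condition for the boundary system.
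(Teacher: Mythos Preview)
Your overall architecture (DeTurck-type gauge, implicit function theorem, Fredholm index zero via ADN boundary conditions, gauge elimination via a Dirichlet problem for a Hodge-Laplacian-type operator on $1$-forms) is exactly the paper's. The genuine gap is in the \emph{choice of gauge}. You use the standard DeTurck vector field $W(\tilde g,g)$ built from the Christoffel difference; the paper instead uses the $1$-form
\[
\omega \;=\; (\lambda+\Lambda)^{-1}\,B_{g+h}(R_\Lambda),
\]
the Bianchi operator of the \emph{unknown} metric applied to the \emph{target} tensor $R_\Lambda=\Ric_\Lambda(g)+r$, and the gauged interior equation is $\Ric_\Lambda(g+h)-R_\Lambda-\mathcal L_g\omega=0$. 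At the linear level both gauges reduce to $-B_g(h)$, so your computation of the linearization $\tfrac12\Delta_L h+\Lambda h$ is correct; but nonlinearly they differ, and only the paper's choice makes the gauge-elimination step go through. Applying $B_{g+h}$ to the paper's gauged equation, the Bianchi identity kills $\Ric_\Lambda(g+h)$ and the $R_\Lambda$-term gives back exactly $(\lambda+\Lambda)\omega$, so one obtains the \emph{homogeneous} equation $B_{g+h}\mathcal L_g\omega+(\lambda+\Lambda)\omega=0$, which at $h=0$ is $\tfrac12(\Delta_H+2\Lambda)\omega=0$; the spectral hypothesis then forces $\omega=0$. With your gauge, applying $B_{g+h}$ to $\Ric_\Lambda(g+h)+\tfrac12\mathcal L_W(g+h)=R_\Lambda$ leaves the inhomogeneous term $B_{g+h}(R_\Lambda)$ on the right, which for $r\neq 0$ has no relation to $W$; the resulting equation for $W^\flat$ is not homogeneous, and the hypothesis on the Dirichlet spectrum of $\Delta_H$ cannot be invoked to conclude $W\equiv 0$. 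This is not a bookkeeping subtlety but the reason DeTurck's original prescribed-Ricci argument uses a gauge built from the target rather than from the Christoffel symbols.

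A second, related point: the Dirichlet data for the gauge $1$-form do not have to be coaxed out of the conformal and mean-curvature conditions, as you suspect. The paper simply \emph{imposes} $\omega|_{\partial M}=0$ as an additional boundary equation in the map $F$ (its second component, with values in $C^{k+1,\alpha}(\partial M,\mathcal T_1)$). Linearized at $(h,r)=(0,0)$ this is $B_g(h)|_{\partial M}=0$, which is precisely the first of the three ADN boundary conditions; together with the trace-free part of $h^T$ and $DH(g)h$ one gets Anderson's elliptic boundary system for $\Delta_L+2\Lambda$, whose Fredholm index-zero property is quoted from \cite{MAEinsteinB2008}. So the ``delicate boundary bookkeeping'' you anticipate dissolves once the gauge is chosen correctly and its Dirichlet condition is written into the operator from the start.
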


The ADN condition, introduced by M. Anderson \cite{MAEinsteinB2008}, is a mixed Dirichlet-Neumann condition specified in Definition \ref{ConditionDN}.

Our solution's regularity is optimal, as can be seen by applying a low-regularity diffeomorphism to the equation (\ref{mainequation}).

We also establish an analogous result for the contravariant Ricci operator, which is better suited to negative curvature cases.

Furthermore, we extend this approach to other operators, such as the Einstein tensor.

Finally, we show that the image of certain Riemann-Christoffel-type operators forms smooth submanifolds in $C^{\infty}$.

\medskip

{\small\sc Acknowledgments}: I thank Lan-Hsuan Huang for insightful discussions concerning \cite{ANandHUANG2024} and \cite{MAEinsteinB2008}. This article was partially funded by the French National Research Agency (ANR) grants ANR-23-CE40-0010-02 and ANR-24-CE40-0702.

\section{Definitions, Notations, and Conventions}\label{sec:def}

We denote by $\nabla$ the Levi-Civita connection of $g$, by $\Ric(g)$ its Ricci curvature, and by $\Riem(g)$ its sectional Riemannian curvature.

Let ${\mathcal T}_p^q$ be the set of covariant tensors of rank $p$ and contravariant tensors of rank $q$.
When $p=2$ and $q=0$, we denote by ${\mathcal S}_2$ the subset of symmetric tensors,
which decomposes as ${\mathcal G} \oplus {\mathring{\mathcal S}_2}$, where ${\mathcal G}$ is the set of $g$-conformal tensors and 
${\mathring{\mathcal S}_2}$ is the set of traceless tensors (relative to $g$).
The set ${\mathcal S}_2^+$ denotes the subset of ${\mathcal S}_2$ consisting of positive definite tensors. 

For $k\in\N$, $\alpha\in(0,1)$ and a tensor bundle $E$,  $C^{k,\alpha}(M,E)$ is the usual Hölder space of tensor fields with $C^{k,\alpha}$ regularity.
The space of conformal class of metric of $C^{k,\alpha}$ regularity on the boundary is denoted by
$$
\begin{array}{lll}
[C^{k,\alpha}(\partial M,{\mathcal S}_2^+)]&=&\{[\gamma],\;\gamma\in C^{k,\alpha}(\partial M,{\mathcal S}_2^+)\}\\
&\cong&\{\gamma\in C^{k,\alpha}(\partial M,{\mathcal S}_2^+), |\gamma|=1\},
\end{array}
$$
where $|.|$ is the determinant (with respect to a fixed background metric, say $g^T$), and where for $\gamma\in C^{k,\alpha}(\partial M,{\mathcal S}_2^+)$, we identify its conformal class $[\gamma]$ with its unique representative of determinant 1.

We use Einstein's summation convention (indices range from $1$ to $n$), and we use 
$g_{ij}$ and its inverse $g^{ij}$ to raise or lower indices.

The Laplacian is defined by
$$
\triangle=-tr\nabla^2=\nabla^*\nabla,
$$
where $\nabla^*$ is the formal $L^2$ adjoint of $\nabla$. The Lichnerowicz Laplacian acting on symmetric 2-covariant tensor fields is \cite{Lichnerowicz:prop}
\bel{laplichne}
\triangle_L=\triangle+2(\Ric-\Riem),
\ee
where
$$(\Ric\; u)_{ij}=\frac{1}{2}[\Ric(g)_{ik}u^k_j+\Ric(g)_{jk}u^k_i],$$
and
$$
(\Riem \; u)_{ij}=\Riem(g)_{ikjl}u^{kl}.
$$
For $u$ a symmetric 2-covariant tensor, its divergence is defined by
$$ (\mbox{div}u)_i=-\nabla^ju_{ji}.$$ For a 1-form
$\omega$ on $M$, its divergence is defined by:
$$
d^*\omega=-\nabla^i\omega_i,
$$
and the symmetric part of its covariant derivatives:
$$
({\mathcal
L}\omega)_{ij}=\frac{1}{2}(\nabla_i\omega_j+\nabla_j\omega_i),$$
(note that ${\mathcal L}^*=\mbox{div}$).
The Hodge-de Rham Laplacian acting on 1-forms is denoted by
$$
\Delta_H=dd^*+d^*d=\Delta+\Ric.
$$

The Bianchi operator for symmetric 2-tensors into 1-forms is defined as:
$$
B_g(h)=\div_gh+\frac{1}{2}d(\Tr_gh).
$$

\section{Fredholm Properties and Isomorphisms}
We recall here the Fredholm properties of Laplacian-type operators plus zero-order terms acting on 1-forms or on symmetric two-tensors.
We begin by stating a well-known fact (see eg. \cite{MMMT}).

\begin{proposition}\label{DeltaHFredholm}
Let $k\in\N$, $\alpha\in(0,1)$, and $c$ be a real number. 
The operator from $C^{k+2,\alpha}(M,\mathcal T_1)$ to $C^{k,\alpha}(M,\mathcal T_1)\times C^{k+2,\alpha}(M,\mathcal T_1)_{|\partial M},$ given by
$$
\left\{
\begin{array}{ll}
(\Delta_H+c)\omega &\mbox{ in } M\\
\omega &\mbox{ on } \partial M\\
\end{array}
\right.
$$
is Fredholm of index 0.
\end{proposition}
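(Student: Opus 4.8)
The plan is to read the displayed operator as the Dirichlet boundary value problem for a Laplace-type operator and to combine the Agmon--Douglis--Nirenberg theory of elliptic boundary problems, which yields the Fredholm property, with a self-adjointness argument, which forces the index to be zero.

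First I would check that the boundary value problem is elliptic. Since $\Delta_H+c=\Delta+\Ric+c=\nabla^*\nabla+\Ric+c$ acting on $\mathcal T_1$, the operator has scalar principal symbol $|\xi|^2\,\Id$, hence is elliptic with smooth coefficients, and it only remains to verify the Lopatinski--Shapiro (complementing) condition for the Dirichlet boundary operator $\omega\mapsto\omega|_{\partial M}$. Freezing the coefficients at a boundary point and passing to the half-space model $|\xi'|^2 v(x_n)-v''(x_n)=0$ on $\{x_n>0\}$ with $\xi'\neq 0$, the bounded solutions form the one-dimensional space spanned by $e^{-|\xi'|x_n}$, on which the Dirichlet trace $v\mapsto v(0)$ is injective; this is precisely the complementing condition. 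Consequently $\bigl((\Delta_H+c),\ \cdot|_{\partial M}\bigr)$ is an elliptic boundary value problem, and by the ADN a priori estimates together with a parametrix construction (as recalled in \cite{MMMT}) the displayed operator is bounded with closed range and finite-dimensional kernel and cokernel, i.e.\ Fredholm; by elliptic regularity its kernel, cokernel and index are independent of $k$ and $\alpha$.

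To see that the index is $0$, I would reduce to homogeneous boundary data. Since the trace on $\partial M$ of a $C^{k+2,\alpha}$ form on $M$ is a $C^{k+2,\alpha}$ form on $\partial M$, and conversely the restriction map admits a bounded linear right inverse $\mathcal E$, writing $\omega=\mathcal E\phi+\psi$ identifies the operator of the statement with the reduced operator
$$
P_0\colon\ \{\psi\in C^{k+2,\alpha}(M,\mathcal T_1):\ \psi|_{\partial M}=0\}\ \longrightarrow\ C^{k,\alpha}(M,\mathcal T_1),\qquad P_0\psi=(\Delta_H+c)\psi ,
$$
in the sense that the kernels and cokernels of the two operators are canonically isomorphic, so their indices agree. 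Now $\Delta_H+c$ is formally $L^2$-self-adjoint, and the Dirichlet condition is self-adjoint for it: the boundary value problem adjoint to $P_0$ is again the Dirichlet problem for $\Delta_H+c$. Hence $\mathrm{coker}\,P_0\cong\ker(P_0)^*=\ker P_0$, so that $\operatorname{ind}P_0=\dim\ker P_0-\dim\mathrm{coker}\,P_0=0$, and therefore the operator of the statement has index $0$.

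The one point needing care — and the step I would most want to nail down — is the identification in the last paragraph of $\mathrm{coker}\,P_0$ with the kernel of the adjoint Dirichlet problem \emph{within the \holder category} rather than merely in $L^2$. This is the standard duality-and-regularity argument: a form in $C^{k,\alpha}(M,\mathcal T_1)$ that annihilates the range of $P_0$ is a distributional solution of the homogeneous adjoint Dirichlet problem, hence is smooth up to the boundary by elliptic boundary regularity and so lies in $\ker P_0$, and the $L^2$ pairing between these finite-dimensional spaces is nondegenerate. Everything else is the standard machinery recalled in \cite{MMMT}.
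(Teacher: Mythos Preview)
The paper does not actually prove this proposition: it is stated as a well-known fact with a reference to \cite{MMMT} and no argument is given. Your proof is correct and follows the standard route --- ADN ellipticity plus the (always satisfied) Dirichlet complementing condition for an operator with scalar principal symbol $|\xi|^2\Id$ gives the Fredholm property, and the reduction to homogeneous boundary data together with the formal self-adjointness of $\Delta_H+c$ under Dirichlet conditions yields index zero. The care you take in the last paragraph about identifying the \holder cokernel with the kernel of the adjoint Dirichlet problem is the right point to flag, and your resolution via elliptic boundary regularity is the standard and correct one.
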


Let $C_0^{k,\alpha}(M,\mathcal T_1)$ denote the space of $C^{k,\alpha}(M,\mathcal T_1)$ 1-forms satisfying the Dirichlet condition (i.e., vanishing on the boundary of $M$).

\begin{corollary}\label{DeltaHiso}
Let $k\in\N$, $\alpha\in(0,1)$, and $c$ be a real number. If $-c$ is not in the spectrum of the Hodge Laplacian $\Delta_H$ with Dirichlet boundary condition, then
$$
\Delta_H+c : C_0^{k+2,\alpha}(M,\mathcal T_1)\longrightarrow C^{k,\alpha}(M,\mathcal T_1),
$$
is an isomorphism.
\end{corollary}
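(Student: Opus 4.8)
The plan is to derive Corollary \ref{DeltaHiso} from Proposition \ref{DeltaHFredholm} by restricting to the subspace of 1-forms that vanish on $\partial M$. First I would observe that the map
$$
\Phi: \omega \longmapsto \bigl((\Delta_H+c)\omega,\ \omega|_{\partial M}\bigr)
$$
is, by Proposition \ref{DeltaHFredholm}, a Fredholm operator of index $0$ from $C^{k+2,\alpha}(M,\mathcal{T}_1)$ to $C^{k,\alpha}(M,\mathcal{T}_1)\times C^{k+2,\alpha}(M,\mathcal{T}_1)_{|\partial M}$. Its restriction to the closed subspace $C_0^{k+2,\alpha}(M,\mathcal{T}_1)=\ker(\omega\mapsto\omega|_{\partial M})$ is precisely the operator $\Delta_H+c$ landing in the first factor $C^{k,\alpha}(M,\mathcal{T}_1)$; since we are chopping off the boundary-trace factor from the target and simultaneously passing to the kernel of that trace in the source, a standard linear-algebra/functional-analysis argument shows this restricted operator is again Fredholm of index $0$. (Concretely: $\ker$ of the restricted operator equals $\ker\Phi$, and one checks the cokernels match up, so the index is preserved.)

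Next I would argue that the restricted operator $\Delta_H+c:C_0^{k+2,\alpha}(M,\mathcal{T}_1)\to C^{k,\alpha}(M,\mathcal{T}_1)$ is injective under the spectral hypothesis. Suppose $\omega\in C_0^{k+2,\alpha}$ satisfies $(\Delta_H+c)\omega=0$ with $\omega|_{\partial M}=0$. Then $\omega$ is an eigenform of $\Delta_H$ with Dirichlet boundary condition for the eigenvalue $-c$; elliptic regularity (the Dirichlet problem for $\Delta_H$ being elliptic with ADN/complementing boundary conditions) guarantees $\omega$ is smooth, so it is a genuine element of the Dirichlet spectrum. By hypothesis $-c$ is not in that spectrum, hence $\omega=0$. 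Therefore the operator is injective.

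Finally, injectivity of a Fredholm operator of index $0$ forces surjectivity: $\dim\ker=0$ and index $0$ give $\operatorname{codim}\,\mathrm{Image}=0$, and since the image of a Fredholm operator is closed, the operator is onto. A bounded bijection between Banach spaces is a topological isomorphism by the open mapping theorem, which completes the proof. The only genuinely delicate point — and the one I would be most careful about — is the bookkeeping in the first step: verifying that passing from $\Phi$ on the full space to its restriction on $C_0^{k+2,\alpha}$ (with correspondingly truncated target) preserves the Fredholm index. This is routine but deserves an explicit sentence, e.g. by noting that the short exact sequence relating the two source spaces and the two target spaces forces the alternating sum of dimensions (kernels minus cokernels) to agree. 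Everything else is standard elliptic theory and the open mapping theorem.
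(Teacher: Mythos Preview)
The paper does not give an explicit proof of this corollary; it is stated immediately after Proposition~\ref{DeltaHFredholm} and treated as a direct consequence. Your proposal is correct and supplies exactly the kind of argument one would expect: Fredholm of index~$0$ plus injectivity (from the spectral hypothesis) gives an isomorphism.

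The only place where your writeup is slightly loose is the index-preservation step. Your claim that ``the cokernels match up'' via a short exact sequence is morally right but deserves one more line of justification: the boundary trace map $C^{k+2,\alpha}(M,\mathcal T_1)\to C^{k+2,\alpha}(M,\mathcal T_1)_{|\partial M}$ is surjective (standard extension from the boundary), so choosing a bounded right inverse $E$ one can write $\Phi$, after the identification $C^{k+2,\alpha}\cong C_0^{k+2,\alpha}\oplus E(\,\cdot\,)$, as an upper-triangular operator with diagonal blocks $(\Delta_H+c)|_{C_0^{k+2,\alpha}}$ and $\mathrm{Id}$; since the second block is invertible, the Fredholm index of $\Phi$ equals that of the first block. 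With that sentence added, the argument is complete. The elliptic-regularity remark in your injectivity step is harmless but not strictly needed, since the Dirichlet spectrum is independent of the regularity class by that same regularity.
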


We recall that the kernel of $\Delta_H+c$ with Dirichlet boundary condition is generically trivial for boundary deformations towards the interior (see, for example, \cite{ANandHUANG2024}, Lemma 1.2).\\

The following proposition involves the linearization $D H(g)$ of the mean curvature operator $H$ at $g$.
Even though we will not need the explicit formula, we recall one (see \cite{Lott:H}, for example, with a different sign convention).
Let $\nu$ be the outward-pointing normal to the boundary and
$ \mathbb I_{AB}=-\langle\nabla_{\partial_A}\nu,\partial_B\rangle$ be the second fundamental form of the boundary, so that $H=\Tr_{g^T}\mathbb I.$
Then we have
$$
DH(g)h=div_{g^T}(h(\nu,.)^T)+\frac12\Tr_{g^T}((\nabla_\nu h)^T)-\frac12 h(\nu,\nu)H.
$$

We can now state the following

\begin{proposition}\label{LcFredholm}
Let $k\in\N$, $\alpha\in(0,1)$, and $c$ be a real number. 
The operator  {\footnotesize
$$L_c: C^{k+2,\alpha}(M,\mathcal S_2)\longrightarrow C^{k,\alpha}(M,\mathcal S_2)\times C^{k+1,\alpha}(\partial M,\mathcal S_2)\times C^{k+2,\alpha}(\partial M,\mathring{\mathcal S}_2)\times C^{k+1,\alpha}(\partial M),$$}
given by
$$
L_c(h)=\left\{
\begin{array}{ll}
(\Delta_L+c)h &\mbox{in } M\\
B_g(h) &\mbox{on } \partial M\\
h^T-\frac{1}{n-1}\Tr_{g^T} (h^T) g^T&\mbox{on } \partial M\\
D H(g)h&\mbox{on } \partial M\\
\end{array}
\right.
$$
is Fredholm of index 0.
\end{proposition}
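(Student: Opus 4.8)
The plan is to establish the Fredholm property by checking that the boundary value problem $L_c$ is elliptic with boundary conditions satisfying the Lopatinski--Shapiro (complementing) condition, and then invoking the general ADN theory for elliptic systems (Agmon--Douglis--Nirenberg, or the formulation in \cite{MAEinsteinB2008}) which guarantees that such a system between the appropriate \holder spaces is Fredholm of index $0$. The interior operator $\Delta_L+c$ is a perturbation of the rough Laplacian by a zero-order term, hence it is a second-order, properly elliptic, self-adjoint (up to lower order) operator on $\mathcal S_2$; ellipticity of the interior part is immediate since the principal symbol of $\Delta_L$ is $|\xi|^2\Id$. So the entire content is the verification of the boundary conditions.

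First I would count orders and equations to see that the boundary data are of the right type: on a second-order elliptic system acting on the $\binom{n+1}{2}$-dimensional bundle $\mathcal S_2$, one needs exactly $\binom{n+1}{2}$ scalar boundary conditions. Here $B_g(h)$ contributes $n$ conditions (a $1$-form) of order $1$; the conformal condition $h^T-\frac{1}{n-1}\Tr_{g^T}(h^T)g^T$ contributes $\binom{n}{2}-1=\frac{(n+1)(n-2)}{2}$ conditions of order $0$; $DH(g)h$ contributes $1$ condition of order $1$; and these total $n+\frac{(n+1)(n-2)}{2}+1 = \frac{(n+1)n}{2} = \binom{n+1}{2}$, as required. (This count is exactly the one underlying the choice of target regularities $C^{k,\alpha}$, $C^{k+1,\alpha}$, $C^{k+2,\alpha}$, $C^{k+1,\alpha}$.) These are precisely the ADN-type conditions of Definition~\ref{ConditionDN}, for which M.~Anderson \cite{MAEinsteinB2008} has already verified the complementing condition for the operator $\Delta_L$; adding the zero-order term $c$ does not affect the principal symbols of either the interior operator or the boundary operators, so the complementing condition is inherited verbatim. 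Once the complementing condition holds, the ADN a priori estimates give that $L_c$ has finite-dimensional kernel and closed range with finite-dimensional cokernel, i.e. it is Fredholm; its index is $0$ because $\Delta_L+c$ differs from the model problem (say $c=0$, or the analogous problem for which index $0$ is known) by a compact perturbation, and more fundamentally because the boundary system is, up to lower-order and compact terms, formally self-adjoint in the appropriate sense — one can also argue the index is $0$ by a homotopy in $c$ through Fredholm operators, the index being locally constant.

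The main obstacle — or rather the main point requiring care — is the verification of the Lopatinski--Shapiro condition for the combined boundary operator, and this is exactly where I would lean on \cite{MAEinsteinB2008}: the issue is that $B_g$ alone is not a "coercive" boundary condition for $\Delta_L$ (the pair is related to the Bianchi-gauged Einstein operator and its well-known gauge degeneracies), and it is only in combination with the trace-free part of $h^T$ and the linearized mean curvature that one gets a complementing system. Concretely, one freezes coefficients at a boundary point, passes to the half-space model with $\Delta_L$ replaced by $\Delta+c\mapsto |\xi'|^2-\partial_t^2$ acting on constant-coefficient symmetric tensors, and must show that the only bounded-as-$t\to\infty$ solution of the homogeneous ODE system satisfying the homogenized boundary conditions at $t=0$ is the zero solution; this is a linear algebra computation in the tangential frequency $\xi'\in\mathbb R^{n-1}\setminus\{0\}$ decomposing $\mathcal S_2$ along the normal direction. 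I would either reproduce Anderson's computation adapted to our sign/notation conventions or cite it directly. Everything else — that $C^{k+2,\alpha}(M,\mathcal S_2)$ maps continuously into the stated product, that the estimate upgrades to the Fredholm alternative, that the index equals $0$ — is standard once the complementing condition is in hand.
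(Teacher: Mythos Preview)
Your proposal is correct and takes essentially the same approach as the paper: cite Anderson \cite{MAEinsteinB2008} for ellipticity and the Fredholm index-zero property of $L_c$ at a specific value of $c$ (the paper uses $c=-2\lambda$), and then deduce the general case by homotopy in $c$, since adding a zero-order term does not affect principal symbols or the complementing condition and the index is locally constant on Fredholm operators. The paper's proof is simply more terse, omitting your equation count and the sketch of the Lopatinski--Shapiro verification.
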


\begin{proof}
In \cite{MAEinsteinB2008}, M. Anderson proves that $L_c$ is elliptic and Fredholm of index zero when $c=-2\lambda$; the case of arbitrary $c$ follows immediately by homotopy.
\end{proof}

This proposition motivates the definition of a mixed condition between Dirichlet and Neumann, introduced by M. Anderson.

\begin{definition}[ADN condition]\label{ConditionDN}
We say that $h\in C^{k+2,\alpha}(M,\mathcal S_2)$ satisfies the ADN condition if
$$
\left\{
\begin{array}{ll}
B_g(h)=0 &\mbox{on } \partial M\\
h^T-\frac{1}{n-1}\Tr_{g^T} (h^T) g^T=0&\mbox{on } \partial M\\
D H (g)h=0&\mbox{on } \partial M\\
\end{array}
\right.
$$
\end{definition}

We thus immediately obtain

\begin{corollary}\label{DeltaLiso}
If $-c$ is not in the spectrum of the Lichnerowicz Laplacian with ADN condition, then $L_c$ is an isomorphism.
\end{corollary}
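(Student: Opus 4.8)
The plan is to read Corollary \ref{DeltaLiso} as an immediate consequence of Proposition \ref{LcFredholm} together with the standard linear-algebra fact that a Fredholm operator of index $0$ is an isomorphism if and only if it is injective. First I would recall the definition of the spectrum of the Lichnerowicz Laplacian $\Delta_L$ under the ADN condition: a real number $\mu$ lies in this spectrum precisely when there exists a nonzero $h\in C^{k+2,\alpha}(M,\mathcal S_2)$ satisfying the three boundary equations of Definition \ref{ConditionDN} together with $\Delta_L h=\mu h$ in $M$. Equivalently, $-c$ is \emph{not} in the spectrum means that the only $h$ with $(\Delta_L+c)h=0$ in $M$ and the ADN boundary conditions holding is $h=0$; but that is exactly the statement that $\ker L_c=\{0\}$.

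Next I would invoke Proposition \ref{LcFredholm}, which asserts that $L_c$ is Fredholm of index $0$ between the indicated Hölder spaces. For a Fredholm operator of index $0$, the dimension of the kernel equals the dimension of the cokernel, so injectivity forces the cokernel to be trivial as well; since the image of a Fredholm operator is closed, a trivial cokernel means the image is the whole target space, i.e. $L_c$ is surjective. Combining injectivity and surjectivity, $L_c$ is a continuous bijection between Banach spaces, hence an isomorphism by the open mapping theorem. This is the entire content of the corollary, so the "proof" is really just assembling these two observations in the right order.

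There is essentially no obstacle here — the only point requiring a modicum of care is making sure the notion of "spectrum of $\Delta_L$ with ADN condition" is pinned down so that "$-c$ not in the spectrum" is literally synonymous with "$\ker L_c=0$"; once the boundary-value problem defining that spectrum is the one given by the last three lines of $L_c$, the equivalence is tautological. I would phrase the proof in one or two sentences: since $L_c$ is Fredholm of index $0$ by Proposition \ref{LcFredholm}, it is an isomorphism as soon as it is injective, and injectivity of $L_c$ is precisely the hypothesis that $-c$ is not an eigenvalue of $\Delta_L$ under the ADN boundary conditions. (This mirrors exactly the passage from Proposition \ref{DeltaHFredholm} to Corollary \ref{DeltaHiso} for the Hodge Laplacian case.)
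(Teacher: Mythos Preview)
Your proposal is correct and is precisely the argument the paper has in mind: the corollary is stated as an immediate consequence of Proposition \ref{LcFredholm} and Definition \ref{ConditionDN}, with no proof given beyond the words ``We thus immediately obtain.'' Your unpacking---Fredholm of index $0$ plus trivial kernel (equivalently, $-c$ not in the ADN spectrum of $\Delta_L$) implies isomorphism---is exactly that immediate step, mirroring the passage from Proposition \ref{DeltaHFredholm} to Corollary \ref{DeltaHiso}.
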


\begin{remark}\label{remHypSurfMin}
The tensor $g$ is in the kernel of $L_0$  if and only if 
$\partial M$ is a minimal hypersurface of $M$.
Indeed, the first three components of $L_0(h)$ vanish if $h=g$. For the fourth, note that if $g_t=(1+t)^2g$, then $H(g_t)=(1+t)^{-1}H(g)$, so by linearizing at $t=0$, we obtain
$$
2DH(g)g=-H(g).
$$
\end{remark}

\section{Case of Ricci Curvature}\label{sec:ppal}
In this section, we prove Theorem \ref{maintheorem}.
It is now well known that the Ricci equation is not elliptic due to the invariance
of curvature under diffeomorphism. We will modify this equation by drawing inspiration
from DeTurck's method. We thus add a gauge term so that this new equation becomes elliptic while ensuring
that its solutions remain solutions of the Ricci equation.
In order to construct our new equation, let us recall some differentials of operators.\\

We already have (see \cite{Besse}, for example)
$$
D\Ric(g)h=\frac12\Delta_Lh-\mathcal L_gB_g(h).
$$
The linearization in the first variable of the Bianchi operator is (see, for example, \cite{Delay:study})
$$
[D B_{(.)}(R)](g)h=-{R}B_g(h)+T(g,R)h,
$$
where ${R}$ is here identified with the corresponding endomorphism of $T^*M$  and
$$
[T(g,R)h]_j=T(g,R)^{kl}_j h_{kl}=\frac{1}{2}(\nabla^k R^l_j+\nabla^l
  R^k_j-\nabla_jR^{kl})
h_{kl}.
$$
We define 
$$
\Ric_\Lambda(g):=\Ric(g)+\Lambda g
$$
In particular, if $g$ is Einstein with $\Ric(g)=\lambda g$, we have 
$$
[D B_{(.)}(\Ric_\Lambda(g))](g)h=-(\lambda+\Lambda)B_g(h).
$$

Finally, recall that for any metric $g$, $B_{g}(\Ric_\Lambda(g))=0$
by the Bianchi identity.

The equation we choose to solve will be 
\bel{equaelliptique}
F(r,h)=(0,0,\gamma,\mathcal H),
\ee
with
\bel{defF}
F(h,r):=
\left\{
\begin{array}{ll}
\Ric_\Lambda(g+h)-R_\Lambda-{\mathcal L}_g((\lambda+\Lambda)^{-1}B_{g+h}(R_\Lambda)) &\text{in } M,\\
-(\lambda+\Lambda)^{-1}B_{g+h}(R_\Lambda) &\text{on }\partial M,\\
|(g+h)^T|^{\frac{-1}{n-1}}(g+h)^T &\text{on }\partial M,\\
H(g+h) &\text{on }\partial M,\\
\end{array}
\right.
\ee
where 
$$
R_\Lambda=\Ric(g)+\Lambda g+r=(\lambda+\Lambda) g+r,
$$
and $|(g+h)^T|$ denotes the determinant of $(g+h)^T$.\\

Let us first verify that the solutions of the new equation are solutions of
the equation we are interested in.

\begin{proposition}\label{sol}
Under the conditions of Theorem \ref{maintheorem}, if
  $h\in C^{k+2,\alpha}(M,\mathcal S_2)$ is small enough, and if the metric
$g+h$ is a solution of \eq{equaelliptique}, then it is a solution of 
$$
\left\{
\begin{array}{ll}
\Ric_\Lambda(g+h)=R_\Lambda &\text{in } M,\\
|(g+h)^T|^{\frac{-1}{n-1}}(g+h)^T = \gamma&\text{on }\partial M,\\
H(g+h)=\mathcal H &\text{on }\partial M,\\
\end{array}
\right.
$$
\end{proposition}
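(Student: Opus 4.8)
The plan is to carry out DeTurck's gauge argument in this boundary setting. Comparing the definition \eqref{defF} of $F$ with the equation \eqref{equaelliptique}, the third and fourth components give at once $|(g+h)^T|^{-1/(n-1)}(g+h)^T=\gamma$ and $H(g+h)=\mathcal H$ on $\partial M$, so those two conclusions are immediate; the whole content is to show that the gauge $1$-form
$$
\omega:=(\lambda+\Lambda)^{-1}B_{g+h}(R_\Lambda)
$$
(well defined since $\lambda+\Lambda\neq0$) vanishes identically. Indeed the first component of \eqref{equaelliptique} reads $\Ric_\Lambda(g+h)-R_\Lambda=\mathcal L_g\omega$ in $M$, while the second reads $\omega=0$ on $\partial M$, so $\omega\equiv 0$ gives at once $\Ric_\Lambda(g+h)=R_\Lambda$.

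To obtain $\omega\equiv 0$, I would apply the Bianchi operator $B_{g+h}$ to the interior equation $\Ric_\Lambda(g+h)-R_\Lambda=\mathcal L_g\omega$. Since $B_{g+h}\bigl(\Ric_\Lambda(g+h)\bigr)=0$ by the contracted Bianchi identity and $B_{g+h}(R_\Lambda)=(\lambda+\Lambda)\omega$ by definition of $\omega$, this produces the homogeneous Dirichlet problem
$$
P_h\omega:=B_{g+h}(\mathcal L_g\omega)+(\lambda+\Lambda)\,\omega=0\ \text{ in }M,\qquad \omega=0\ \text{ on }\partial M .
$$
Here $P_h$ is a second-order elliptic operator on $1$-forms whose principal part is $\tfrac12$ times that of $\Delta_H$ and whose coefficients depend continuously on $g+h$ and its Christoffel symbols. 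For $h=0$, the Weitzenböck identity $2B_g\mathcal L_g=\Delta_H-2\Ric$ on $1$-forms, together with $\Ric(g)=\lambda g$, gives
$$
P_0=\tfrac12(\Delta_H-2\lambda)+(\lambda+\Lambda)=\tfrac12\bigl(\Delta_H+2\Lambda\bigr).
$$
Since, by hypothesis, $-2\Lambda$ is not in the spectrum of $\Delta_H$ with Dirichlet boundary condition, Corollary \ref{DeltaHiso} shows that $P_0\colon C_0^{k+2,\alpha}(M,\mathcal T_1)\to C^{k,\alpha}(M,\mathcal T_1)$ is an isomorphism.

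The last step, and the only one that needs genuine care, is a perturbation argument. As $h\to 0$ in $C^{k+2,\alpha}$ the operator $P_h$ converges to $P_0$ in the norm of bounded operators from $C_0^{k+2,\alpha}(M,\mathcal T_1)$ to $C^{k,\alpha}(M,\mathcal T_1)$; since the isomorphisms form an open set, $P_h$ is still an isomorphism, in particular injective, for $h$ small enough. The $1$-form $\omega=(\lambda+\Lambda)^{-1}B_{g+h}(R_\Lambda)$ lies a priori only in $C^{k+1,\alpha}$, so one must also invoke elliptic regularity for the Dirichlet problem $P_h\omega=0$, $\omega|_{\partial M}=0$, in order to place $\omega$ in $C_0^{k+2,\alpha}$; both facts follow from the mapping properties of $B_{(\cdot)}$ and standard Schauder estimates. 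Injectivity then forces $\omega=0$, hence $\Ric_\Lambda(g+h)-R_\Lambda=\mathcal L_g\omega=0$ in $M$, which together with the two boundary identities noted above is exactly the assertion of the proposition.
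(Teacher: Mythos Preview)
Your argument is correct and follows the same DeTurck–Bianchi route as the paper: apply $B_{g+h}$ to the first component, use the contracted Bianchi identity, identify $2P_0=\Delta_H+2\Lambda$, and conclude by the openness of isomorphisms. The only cosmetic difference is that the paper applies Corollary~\ref{DeltaHiso} with indices shifted down by one, viewing $P_g$ as an isomorphism $C_0^{k+1,\alpha}(M,\mathcal T_1)\to C^{k-1,\alpha}(M,\mathcal T_1)$ so that $\omega\in C^{k+1,\alpha}$ already lies in the domain, whereas you work in $C_0^{k+2,\alpha}\to C^{k,\alpha}$ and then invoke elliptic regularity to place $\omega$ there; both are fine, the former simply avoids the extra regularity step.
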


\begin{proof}
Applying $B_{g+h}$ to the first component of \eq{equaelliptique}, we note that $B_{g+h}[\Ric_\Lambda(g+h)]=0$ by the Bianchi identity. Thus, defining 
$$\omega:=(\lambda+\Lambda)^{-1}B_{g+h}(R_\Lambda),$$ we see that $\omega$ vanishes at the boundary and
$$
P_{g+h}\omega:=B_{g+h}[{\mathcal L}_g(\omega)]+(\lambda+\Lambda)\omega=0.
$$

The operator $P_g$ can be expressed in local coordinates as:
$$
(P_g\omega)_j=-\nabla^{i}\left[\frac{1}{2}(\nabla_i\omega_j+\nabla_j\omega_i)\right]
+\frac{1}{2}\nabla_j\nabla^i\omega_i+\Ric(g)_{j}^k\omega_k+\Lambda\omega_j.
$$
Commuting the derivatives and multiplying by $2$, we obtain 
$$2P_g=\Delta_g\omega+\Ric_g\omega+2\Lambda \omega=(\Delta_H+2\Lambda) \omega.$$
Since the operator $P_g$ with Dirichlet condition has a trivial kernel, by corollary \ref{DeltaHiso}, it is an isomorphism from
$C^{k+1,\alpha}_0(M,\mathcal T_1)$ to $C^{k-1,\alpha}(M,\mathcal T_1)$, then for small $h$ in $C^{k+2,\alpha}(M,\mathcal S_2)$,
$P_{g+h}$ with Dirichlet condition remains injective, we conclude that $\omega=0$.
\end{proof}

\begin{remark}
The fact that $B_{g+h}(R_\Lambda)$ vanishes proves that the identity map from $(M,g+h)$ to
$(M,R_\Lambda)$ is harmonic (see \cite{GL}, for example).
\end{remark}
We will now construct the solutions of \eq{equaelliptique} using an implicit function argument in Banach spaces.

\begin{proposition}
Under the conditions of Theorem \ref{maintheorem},
for any small $r\in C^{k+2,\alpha}(M,\mathcal S_2)$, for any conformal class $[\gamma]$ close to $[g^T]$ in $[C^{k+2,\alpha}(\partial M,{\mathcal S}_2^+)]$, and for any function $\mathcal H$ close to $H(g)$ in $C^{k+1,\alpha}(\partial M)$, there exists a unique 
$h$ close to zero in $ C^{k+2,\alpha}(M,\mathcal S_2)$ that solves \eq{equaelliptique}.
\end{proposition}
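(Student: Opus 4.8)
The plan is to solve \eqref{equaelliptique} by the implicit function theorem in Banach spaces, regarding $h$ as the unknown and $(r,[\gamma],\mathcal H)$ as parameters. First I would check that, for $h$ in a small neighbourhood $U$ of $0$ in $C^{k+2,\alpha}(M,\mathcal S_2)$ (so that $g+h$ is still a Riemannian metric), the map $F$ of \eqref{defF} is well defined and smooth from $U\times C^{k+2,\alpha}(M,\mathcal S_2)$ into
{\footnotesize $$C^{k,\alpha}(M,\mathcal S_2)\times C^{k+1,\alpha}(\partial M,\mathcal T_1)\times C^{k+2,\alpha}(\partial M,\mathcal S_2^+)\times C^{k+1,\alpha}(\partial M).$$}
This is routine: each component of $F$ is a universal rational expression in $g+h$, its inverse (which exists and depends smoothly on $h$ for $h$ small) and finitely many covariant derivatives, with the expected loss of regularity — $\Ric$ and $\mathcal L_gB_{g+h}$ lose two derivatives, $B_{g+h}$ and $H$ lose one, and the determinant power loses none. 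Note that the third component $|(g+h)^T|^{-1/(n-1)}(g+h)^T$ automatically has determinant $1$ relative to $g^T$, so it takes values in the $\det=1$ slice that we use as a Banach-manifold model for $[C^{k+2,\alpha}(\partial M,\mathcal S_2^+)]$, whose tangent space at the normalized representative of $[g^T]$ is $C^{k+2,\alpha}(\partial M,\mathring{\mathcal S}_2)$. One checks immediately that $F(0,0)=\big(0,\,0,\,|g^T|^{-1/(n-1)}g^T,\,H(g)\big)$, the base point corresponding to $r=0$, $[\gamma]=[g^T]$, $\mathcal H=H(g)$.

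Next I would compute the partial differential $L:=D_hF(0,0)$. Using $D\Ric(g)h=\tfrac12\Delta_Lh-\mathcal L_gB_g(h)$ together with the identity $[DB_{(\cdot)}(\Ric_\Lambda(g))](g)h=-(\lambda+\Lambda)B_g(h)$ already recalled in the text (here $R_\Lambda$ does not depend on $h$ and equals $\Ric_\Lambda(g)$ at $r=0$), the linearization of the DeTurck gauge term $-\mathcal L_g\big((\lambda+\Lambda)^{-1}B_{g+h}(R_\Lambda)\big)$ is exactly $+\mathcal L_gB_g(h)$, which cancels the $-\mathcal L_gB_g(h)$ produced by $D\Ric(g)h$. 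Hence the interior component of $L$ is $\tfrac12\Delta_Lh+\Lambda h=\tfrac12(\Delta_L+2\Lambda)h$, while the boundary components are, respectively, $B_g(h)$, the rescaled traceless part $|g^T|^{-1/(n-1)}\big(h^T-\tfrac1{n-1}\Tr_{g^T}(h^T)g^T\big)$, and $DH(g)h$. Thus $L$ is the operator $L_{2\Lambda}$ of Proposition \ref{LcFredholm} post-composed with the invertible diagonal rescaling $(\tfrac12,1,|g^T|^{-1/(n-1)},1)$ of the four target factors — here $\lambda+\Lambda\neq0$ is needed so that the gauge term, hence $F$, makes sense. By Corollary \ref{DeltaLiso}, the hypothesis that $-2\Lambda$ is not in the spectrum of the Lichnerowicz Laplacian with ADN condition makes $L_{2\Lambda}$, and therefore $L$, an isomorphism onto the tangent space of the target at $F(0,0)$.

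The implicit function theorem in Banach spaces then provides a neighbourhood of $(0,[g^T],H(g))$ in $C^{k+2,\alpha}(M,\mathcal S_2)\times[C^{k+2,\alpha}(\partial M,\mathcal S_2^+)]\times C^{k+1,\alpha}(\partial M)$ and a smooth map $(r,[\gamma],\mathcal H)\mapsto h$ giving, for each such datum, the unique small $h\in C^{k+2,\alpha}(M,\mathcal S_2)$ solving \eqref{equaelliptique}, which is the assertion. Apart from the bookkeeping of derivative counts, the two steps that genuinely use the structure of \eqref{defF} are the cancellation of the gauge term in $D_hF(0,0)$ and the identification of the conformal-class factor with the $\det=1$ submanifold: one must make sure $F$ really maps into that submanifold and that its third derivative component surjects onto $C^{k+2,\alpha}(\partial M,\mathring{\mathcal S}_2)$, so that Corollary \ref{DeltaLiso} applies verbatim. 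I expect this last geometric point, rather than any analytic difficulty, to be the main thing to pin down carefully.
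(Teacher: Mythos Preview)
Your proposal is correct and follows essentially the same approach as the paper: verify $F(0,0)=(0,0,[g^T],H(g))$, compute $D_hF(0,0)$ using the cancellation between $D\Ric(g)h$ and the DeTurck gauge term to obtain (up to the harmless diagonal rescaling you note) the operator $L_{2\Lambda}$ of Proposition~\ref{LcFredholm}, invoke Corollary~\ref{DeltaLiso}, and apply the implicit function theorem. Your treatment is in fact more explicit than the paper's, which simply records $D_hF(0,0)=\tfrac12 L_{2\Lambda}$ and concludes; your care with the $\det=1$ model for the conformal-class factor and with the target space of $B_g$ is appropriate.
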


\begin{proof}
We consider $F$ as a mapping defined in a neighborhood of zero
in $C^{k+2,\alpha}(M,\mathcal S_2)\times C^{k+2,\alpha}(M,\mathcal S_2)$ with values in 
$C^{k,\alpha}(M,\mathcal S_2)\times C^{k+1,\alpha}(\partial M,\mathcal S_2)\times [C^{k+2,\alpha}(\partial M,{\mathcal S}_2^+)]\times C^{k+1,\alpha}(\partial M)$. We already have 
$$F(0,0)=(0,0,[g^T],H(g)).$$ Considering the differentials of the operators given at the beginning of the section, the differential of $F$ with respect to $h$ at $0$ is
$$
D_{h}F(0,0)=\frac12L_{2\Lambda},
$$
where $L_{2\Lambda}$ is described in Proposition \ref{LcFredholm}.
By Corollary \ref{DeltaLiso}, this operator is an isomorphism from $C^{k+2,\alpha}(M,\mathcal S_2)$ to $C^{k,\alpha}(M,\mathcal S_2)\times C^{k+1,\alpha}(\partial M,\mathcal S_2)\times C^{k+2,\alpha}(\partial M,\mathring {\mathcal S}_2)\times C^{k+1,\alpha}(\partial M)$.
The implicit function theorem allows us to conclude.

\end{proof}

\begin{remark}
If $L_{2\Lambda}$ has a kernel, we can still solve equation (\ref{mainequation}) but modulo a projection. In this case, we take
$$
R_\Lambda=\Ric_\Lambda(g)+r-\frac12\Pi(h)=(\lambda+\Lambda)g+r-\frac12\Pi(h),
$$
where $\Pi$ is the $L^2$-orthogonal projection onto the kernel; see \cite{Delay:RicciPara}.
According to Remark \ref{remHypSurfMin}, if $\Lambda=0$, the hypotheses of Theorem \ref{maintheorem} imply that the boundary of $M$ must not be a minimal hypersurface.
For the case $\Lambda=0$ with minimal boundary, if the kernel is one-dimensional, so generated by $g$, we can also solve (\ref{mainequation}) up to a multiplicative constant, as in \cite{Hamilton1984}.
Here, we can take (see \cite{Delanoe2003} and \cite{Delay:RicciPara}, Remark 4.4)
$$
R_0=e^{-\frac1{2n\lambda}\langle\Tr_gh\rangle}(\Ric(g)+r)=e^{-\frac1{2n\lambda}\langle\Tr_gh\rangle}(\lambda g+r)
$$
where
$$
\langle\Tr_gh\rangle=\frac1{\Vol_g(M)}\int_M\Tr_gh\,d\mu_g.
$$
\end{remark}

\section{Contravariant Ricci Operator}
We are interested here in the inversion of the (shifted) contravariant Ricci operator:
$$g\mapsto \overline\Ric_\Lambda(g):=\overline\Ric(g)+\Lambda g^{-1}$$
whose components in local coordinates are
$$\overline\Ric_\Lambda(g)^{ij}=g^{ik}g^{jl}\Ric(g)_{kl}+\Lambda g^{ij}=g^{ik}g^{jl}\Ric_\Lambda(g)_{kl}.$$
We will use the obvious notation
$$\overline\Ric_\Lambda(g)=g^{-1}\Ric_\Lambda(g)g^{-1}.$$
We adapt the steps of Section \ref{sec:ppal}. First, we have
$$
D\overline\Ric_\Lambda(g)h=g^{-1}[\frac12\Delta_Lh-\mathcal L_g B_g(h)-2\Ric h-\Lambda h]g^{-1}.
$$
Let us set $\overline B_g(\overline R)=B_{g}(g\overline Rg)$, so that if $\nabla\overline  R=0$, we obtain
$$
D\overline B_{(.)}(\overline R)h=-g\overline RB_g(h)+B_g(h\overline Rg+g\overline R h).
$$
If moreover $\overline R=(\lambda+\Lambda) g^{-1}$, we find
$$
D\overline B_{(.)}(\overline R)h=(\lambda+\Lambda) B_g(h)=g\overline RB_g(h).
$$
The gauge equation we choose to solve here will be
$$
F(h,\overline r)=(0,0,\gamma,\mathcal H)
$$
with
\bel{defFRicbar}
\overline F(h,\overline r):=
\left\{
\begin{array}{ll}
\overline F_1(h,r) &\mbox{in } M,\\
(\lambda+\Lambda)^{-1}\overline B_{g+h}(\overline R_\Lambda) &\mbox{on }\partial M,\\
|(g+h)^T|^{\frac{-1}{n-1}}(g+h)^T &\mbox{on }\partial M,\\
H(g+h) &\mbox{on }\partial M,\\
\end{array}
\right.
\ee
where
$$
\overline F_1(h,\overline r):=g[\overline\Ric_\Lambda(g+h)-\overline R_\Lambda]g+{\mathcal L}_g\{(\lambda+\Lambda)^{-1}\overline B_{g+h}[\overline R_\Lambda]\},
$$
and
$$
\overline R_\Lambda=\overline {\Ric_\Lambda}(g)+\overline r=(\lambda+\Lambda)g^{-1}+\overline r.
$$
\begin{theorem}\label{theoremRicContra}
Let $k\in\N\backslash\{0\}$ and $\alpha\in(0,1)$. Let $g$ be an Einstein metric with $\Ric(g)=\lambda g$ and let $\Lambda \in\R$ such that $\lambda+\Lambda\neq0$.
Assume that $4\lambda+2\Lambda$ is not in the spectrum of the Lichnerowicz Laplacian with ADN condition, nor in the spectrum of the Hodge Laplacian with Dirichlet condition.
Then for all $\overline r \in C^{k+2,\alpha}(M,\mathcal S^2)$ close to zero, for every conformal class $[\gamma]$ close to $[g^T]$ in $ [C^{k+2,\alpha}(\partial M,S_2^+)]$, and every function $\mathcal H$ close to $H(g)$ in  $C^{k+1,\alpha}(\partial M,\mathcal S_2)$, there exists a unique $h$ close to zero in $C^{k+2,\alpha}(M,\mathcal S_2)$ such that
$$
\left\{
\begin{array}{ll}
\overline \Ric_\Lambda(g+h)=\overline\Ric_\Lambda(g)+\overline r & \mbox{ on } M \\
 \,[ (g+h)^T]=[\gamma] &\mbox{ on } \partial M\\
H(g+h)=\mathcal H&\mbox{ on } \partial M
\end{array}
\right.
$$
Moreover, the map $(\overline r,[\gamma],\mathcal H)\mapsto h$ is smooth from a neighborhood of $(0,[g^T], H(g))$ into a neighborhood of zero between the corresponding Banach spaces.
\end{theorem}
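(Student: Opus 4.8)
The plan is to run the argument of Section~\ref{sec:ppal} (which proves Theorem~\ref{maintheorem}) step for step, with the covariant gauge operator $F$ of \eqref{defF} replaced by the contravariant operator $\overline F$ of \eqref{defFRicbar} and the Bianchi operator $B_g$ replaced throughout by its contravariant avatar $\overline B_g(\overline R)=B_g(g\overline R g)$. So the proof splits into two parts: a gauge-removal proposition, the analogue of Proposition~\ref{sol}, showing that small solutions of $\overline F(h,\overline r)=(0,0,\gamma,\mathcal H)$ solve the contravariant system of the theorem; and an implicit function theorem argument, the analogue of the final proposition of Section~\ref{sec:ppal}, solving the gauge system for data near $(0,[g^T],H(g))$.

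For the gauge-removal step I would assume $\overline F(h,\overline r)=(0,0,\gamma,\mathcal H)$ with $h$ small and put $\omega:=(\lambda+\Lambda)^{-1}\overline B_{g+h}(\overline R_\Lambda)$, so that the second component of $\overline F$ forces $\omega=0$ on $\partial M$. Vanishing of $\overline F_1$ reads $\overline\Ric_\Lambda(g+h)-\overline R_\Lambda=-g^{-1}\mathcal L_g(\omega)g^{-1}$, and conjugating by $g+h$, using $(g+h)\,\overline\Ric_\Lambda(g+h)\,(g+h)=\Ric_\Lambda(g+h)$, turns this into an identity for the genuine metric quantity $\Ric_\Lambda(g+h)$. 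Applying $B_{g+h}$ and invoking both the contracted Bianchi identity $B_{g+h}(\Ric_\Lambda(g+h))=0$ and $B_{g+h}((g+h)\overline R_\Lambda(g+h))=\overline B_{g+h}(\overline R_\Lambda)=(\lambda+\Lambda)\omega$, I expect to reach $\overline P_{g+h}\omega=0$, where $\overline P_{g+h}$ is a second-order operator that reduces at $h=0$ to $\overline P_g\omega=B_g(\mathcal L_g\omega)-(\lambda+\Lambda)\omega$. Comparing with the operator $P_g$ of Proposition~\ref{sol}, for which $2P_g=\Delta_H+2\Lambda$, this gives $2\overline P_g=\Delta_H-(4\lambda+2\Lambda)$; by the hypothesis that $4\lambda+2\Lambda$ is outside the Dirichlet spectrum of $\Delta_H$ and Corollary~\ref{DeltaHiso}, $\overline P_g$ with Dirichlet condition is an isomorphism (this is where I use $k\ge1$). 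Dirichlet-injectivity then persists for $\overline P_{g+h}$ with $h$ small, exactly as in Proposition~\ref{sol}, hence $\omega=0$, i.e.\ $\overline B_{g+h}(\overline R_\Lambda)=0$, and $\overline F_1=0$ collapses to $\overline\Ric_\Lambda(g+h)=\overline R_\Lambda$.

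For the implicit function step I would regard $\overline F$ as a smooth map between the same Banach spaces used for $F$ in Section~\ref{sec:ppal}, with $\overline F(0,0)=(0,0,[g^T],H(g))$ (the conformal-class slot being handled by the determinant-$1$ normalisation, with tangent space $C^{k+2,\alpha}(\partial M,\mathring{\mathcal S}_2)$ at $[g^T]$, which is exactly what the third component of $\overline F$ sees). From the linearisations supplied above, $D\overline\Ric_\Lambda(g)h=g^{-1}[\tfrac12\Delta_Lh-\mathcal L_gB_g(h)-2\Ric\,h-\Lambda h]g^{-1}$ and $D\overline B_{(.)}((\lambda+\Lambda)g^{-1})h=(\lambda+\Lambda)B_g(h)$, together with $\Ric\,h=\lambda h$ since $g$ is Einstein, the $\mathcal L_gB_g(h)$ terms cancel and I expect $D_h\overline F(0,0)=\tfrac12 L_{-(4\lambda+2\Lambda)}$ in the notation of Proposition~\ref{LcFredholm} (up to the harmless rescaling of the first target factor; the $(\lambda+\Lambda)^{-1}$ in front of $\overline B_{g+h}(\overline R_\Lambda)$ is precisely what makes that linearisation $B_g(h)$, matching $L_c$). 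By the hypothesis that $4\lambda+2\Lambda$ is not in the ADN spectrum of the Lichnerowicz Laplacian and Corollary~\ref{DeltaLiso}, this is an isomorphism onto the correct target, so the implicit function theorem produces a unique small $h=h(\overline r,[\gamma],\mathcal H)$, depending smoothly on the data, with $\overline F(h,\overline r)=(0,0,\gamma,\mathcal H)$; combined with the gauge-removal step this gives the theorem.

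The one place I expect genuine work is the gauge-removal step, on two counts. First, I must check that conjugating $\overline F_1=0$ by $g+h$, applying $B_{g+h}$, and linearising at $h=0$ really produces $\tfrac12(\Delta_H-(4\lambda+2\Lambda))$ --- i.e.\ that the zero-order term is exactly $4\lambda+2\Lambda$, so that the stated Dirichlet-spectrum hypothesis is precisely the right one. Second, I must verify that Dirichlet-injectivity of $\overline P_{g+h}$ survives the perturbation in $h$; this relies on the index-$0$ Fredholm property underlying Corollary~\ref{DeltaHiso} and continuity of the Dirichlet problem in the metric, just as in Proposition~\ref{sol}. A minor but real bookkeeping point is that the contracted Bianchi identity applies to the true metric tensor $\Ric_\Lambda(g+h)$ and not to the $g$-conjugated object $g\,\overline\Ric_\Lambda(g+h)\,g$ that actually appears in $\overline F_1$, which is exactly why one introduces $\overline B_g(\overline R)=B_g(g\overline R g)$ and must shuffle the $(g+h)^{\pm1}$ and $g^{\pm1}$ factors in the right order.
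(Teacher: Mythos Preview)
Your proposal is correct and follows essentially the same approach as the paper: solve the gauged system $\overline F(h,\overline r)=(0,0,\gamma,\mathcal H)$ by the implicit function theorem using $D_h\overline F(0,0)=\tfrac12 L_{-(4\lambda+2\Lambda)}$ and Corollary~\ref{DeltaLiso}, then kill the gauge one-form $\omega$ via the Dirichlet problem for $\overline P_{g}=\tfrac12(\Delta_H-4\lambda-2\Lambda)$ and perturbation. The paper does these two steps in the opposite order within a single proof and applies $\overline B_{g+h}$ directly to $\overline F_1=0$ rather than first conjugating by $g+h$, but this is only a cosmetic difference---your more explicit bookkeeping with the $(g+h)^{\pm1}$ versus $g^{\pm1}$ factors is in fact a bit more careful than the paper's terse ``applying $\overline B_{g+h}$''.
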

\begin{proof}
We still have $\overline F_1(0,0)=0$ and since $g$ is Einstein,
$$
D_h\overline F_1(0,0)=\frac12\Delta_L-2\lambda-\Lambda.
$$
The corollary (\ref{DeltaLiso}) proves that the operator $D_h\overline F(0,0)$ is an isomorphism from $C^{k+2,\alpha}(M,\mathcal S_2)$ to $C^{k,\alpha}(M,\mathcal S_2)$.
By the implicit function theorem, for all $(\overline r,[\gamma],\mathcal H)$ close to $(0,[g^T],H(g))$ in their respective spaces, there exists $h$ close to zero in $C^{k+2,\alpha}(M,\mathcal S_2)$ such that
$$
\overline F(h,\overline r)=(0,0,\gamma,\mathcal H).
$$
We then apply $\overline B_{g+h}$ to the equation $\overline F_1(h,\overline r)=0$, obtaining
$$
\overline P_{g+h}\omega:=B_{g+h}\mathcal L_g\omega-(\lambda+\Lambda) \omega=0
$$
where
$$
\omega=(\lambda+\Lambda)^{-1}\overline B_{g+h}(\overline R_\Lambda)
$$
is zero at the boundary.
By hypothesis, the operator
$$
\overline P_g=\frac12(\Delta-\Ric_g)-(\lambda+\Lambda)=\frac12(\Delta_H-4\lambda-2\Lambda),
$$
with Dirichlet boundary condition
is injective (and is an isomorphism from
$C^{k+1,\alpha}_0(M,\mathcal T_1)$ to $C^{k-1,\alpha}(M,\mathcal T_1)$ by corollary \ref{DeltaHiso}), so if $h$ is small enough, $\overline P_{g+h}$ remains so, thus $\omega=0$.
\end{proof}

\section{Einstein-type Curvature}\label{secEin}
We show here that the method of Section \ref{sec:ppal} can also be adapted to other operators of Einstein type. For $\kappa$ and $\Lambda$ two real constants, we define the tensor
$$
\Ein(g):=\Ric(g)+\kappa R(g)g+\Lambda g.
$$
Thus, for example, when $\kappa=-\frac12$, we recover the Einstein tensor (with cosmological constant $\Lambda$), and if $\kappa=-\frac1{2(n-1)}$ and $\Lambda=0$, the Schouten tensor.

We study the inversion of the operator $\Ein$. Given a symmetric 2-tensor $E$, we seek $g$ such that
\bel{mainequationE}
\Ein(g)=E.
\ee
Since we have
$$
\Tr_g\Ein(g)=(1+n\kappa)R(g)+n\Lambda,
$$
the equation (\ref{mainequationE}) is equivalent to
$$
\Ric(g)=E-\frac{\kappa\Tr_g  E+\Lambda}{1+n\kappa}g.
$$
For any $E$, we define
$$
\mathcal B_g(E)=\div_gE+\frac{2\kappa+1}{2(1+\kappa n)}d\Tr_gE=B_g(E)-\frac{(n-2)\kappa}{2(1+\kappa n)}d\Tr_gE,
$$
such that the Bianchi identity translates here as
$$
\mathcal B_g(\Ein(g))=0.
$$
Knowing already the differential of $B_g(E)$ relative to the metric (see \cite{Delay:study}, for example), we find that the differential of this operator relative to the metric is
$$
D[\mathcal B_{(.)}(E)](g)h=-EB_g(h)+\frac{(n-2)\kappa}{2(1+\kappa n)}d\langle E,h\rangle
+T(E,h),
$$
where $E$ is here identified with the endomorphism of $T^*M$ corresponding to it, and
$$
T(E,h)_j=\frac12(\nabla_kE_{jl}+\nabla_lE_{kj}-\nabla_jE_{kl})h^{kl}.
$$
In particular, if $g$ is Einstein, with $\Ric(g)=\lambda g$, then $\Ein(g)=\tau g$ with $\tau=(1+n\kappa)\lambda+\Lambda$, and
$$
D[\mathcal B_{(.)}(E)](g)h=-\tau B_g(h)+\frac{(n-2)\kappa}{2(1+\kappa n)} \tau d\Tr_g h.
$$
By analogy with Section \ref{sec:ppal}, we define
$$
\mathcal F_1(h,e):=\Ric(g+h)-E+\frac{\kappa\Tr_{g+h}E+\Lambda}{1+\kappa n}{(g+h)}-\mathcal L_{g}(\tau^{-1}\mathcal B_{g+h}(E)),
$$
and
\bel{defFEin}
\mathcal F(h,r):=
\left\{
\begin{array}{ll}
\mathcal F_1(h,e) &\text{in } M,\\
-\tau^{-1}\mathcal B_{g+h}(E) &\text{on }\partial M,\\
|(g+h)^T|^{\frac{-1}{n-1}}(g+h)^T &\text{on }\partial M,\\
H(g+h) &\text{on }\partial M,\\
\end{array}
\right.
\ee
where $$E=\Ein(g)+e=\tau g+e.$$

We already have
$$
\mathcal F_1(0,0)=0.
$$
Here, we obtain
$$
D_h\mathcal F_1(0,0)h=$$
$$
\frac12\Delta_Lh+\frac{1}{1+\kappa n}
\left(\kappa\Tr_g\Ein(g) \;h+\Lambda h-\kappa\langle \Ein(g),h\rangle g\right)
$$
$$
-\frac{(n-2)\kappa}{2(1+\kappa n)}\mathcal L_{g}\tau^{-1}d\langle \Ein(g),h\rangle,
$$
thus,
$$
D_h\mathcal F_1(0,0)h=$$
$$
\frac12\Delta_Lh+\frac{1}{1+\kappa n}
\left(n\kappa\tau \;h+\Lambda h-\kappa\tau\Tr_gh\;g\right)
-\frac{(n-2)\kappa}{2(1+\kappa n)}\nabla\nabla\Tr_gh.
$$
This differential motivates us to define the operator
$$
\begin{array}{lll}
\mathcal P h&:=&\Delta_Lh+\frac{2(n\kappa\tau+\Lambda)}{1+kn}h
+\frac{\kappa}{n(1+\kappa n)}\Big({(n-2)}\Delta \Tr_gh-2 n\tau\Tr_gh\Big)\;g\\
&=&(\Delta_L+{2\kappa n\lambda+2\Lambda})h
+\frac{\kappa}{n(1+\kappa n)}\Big({(n-2)}\Delta \Tr_gh-2 n\tau\Tr_gh\Big)\;g.\\
\end{array}
$$
This respects the splitting $\mathcal S_2=\mathcal G\oplus\mathring {\mathcal S}_2$. In particular, if
$u$ is a function on $M$ and $\mathring h$ is a symmetric traceless 2-tensor field, we have
$$
\mathcal P(ug+\mathring h)=\frac{1}{1+\kappa n}p(u)g+\mathring P(\mathring h),
$$
where
$$
p(u)=(1+2(n-1)\kappa)\Delta u+2\Lambda u,
$$
and 
$$
\mathring P(\mathring h)=\left[\Delta_L+{2\kappa n\lambda +2\Lambda}\right]\mathring h.
$$

If $h=ug$, we find 
 $$D_h\mathcal F_1(0,0)(ug)=
\frac1{2}p(u) g-\frac{(n-2)n\kappa}{2(1+\kappa n)}\mathring\Hess \;u,
$$
where $\mathring\Hess \;u$ is the traceless part of the Hessian of $u$.
If $h=\mathring h$ is traceless, we find
 $$
D_h\mathcal F_1(0,0)(\mathring h)
=\frac12\mathring P(\mathring h).
$$

\begin{proposition}\label{isoEin} We assume $\kappa\neq-\frac1n,-\frac1{2(n-1)}$.
Let $$\upsilon=-2\kappa\left(\frac{2(n-1)\Lambda}{1+2(n-1)\kappa}+n\lambda\right).$$  If $-2\kappa n\lambda-2\Lambda$ is not in the spectrum of $\Delta_L+\upsilon\Tr(.)g$ with ADN$_\kappa$ conditions:
$$
\left\{
\begin{array}{ll}
B_g(h)-\frac{(n-2)\kappa}{2(1+\kappa n)} d\Tr_g h=0 &\text{on } \partial M\\
h^T-\frac{1}{n-1}\Tr_{g^T} (h^T) g^T=0&\text{on } \partial M\\
D H (g)h=0&\text{on } \partial M\\
\end{array}
\right.,
$$ then $D_h\mathcal F(0,0)$ is an isomorphism from $C^{k+2,\alpha}(M,\mathcal S_2)$ into $C^{k,\alpha}(M,\mathcal S_2)\times C^{k+1,\alpha}(\partial M,\mathcal S_2)\times C^{k+2,\alpha}(\partial M,\mathring {\mathcal S}_2)\times C^{k+1,\alpha}(\partial M)$.
\end{proposition}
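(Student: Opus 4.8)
The strategy is to show that $D_h\mathcal{F}(0,0)$ is Fredholm of index zero and injective, whence an isomorphism; the Fredholm property should be inherited from the already-established Proposition \ref{LcFredholm} by a homotopy/perturbation argument, and injectivity will be reduced to the spectral hypotheses via an integration-by-parts computation respecting the splitting $\mathcal{S}_2 = \mathcal{G}\oplus\mathring{\mathcal{S}}_2$.

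First I would identify $D_h\mathcal{F}(0,0)$ explicitly as the operator sending $h$ to $\tfrac12\mathcal{P}h$ in the interior, together with the three boundary components $B_g(h)-\tfrac{(n-2)\kappa}{2(1+\kappa n)}d\Tr_g h$, the conformal-class component $h^T - \tfrac1{n-1}\Tr_{g^T}(h^T)g^T$, and $DH(g)h$ on $\partial M$. The interior part, by the computation just preceding the statement, is $\tfrac12\big[(\Delta_L + 2\kappa n\lambda + 2\Lambda)h + \tfrac{\kappa}{n(1+\kappa n)}\big((n-2)\Delta\Tr_g h - 2n\tau\Tr_g h\big)g\big]$. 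Rewriting the zero-order-in-$\Tr$ piece using $\tau = (1+n\kappa)\lambda + \Lambda$ and the definition of $\upsilon$, this interior operator is exactly $\tfrac12(\Delta_L + 2\kappa n\lambda + 2\Lambda)$ acting on the traceless part plus a term of the form $\Delta_L + \upsilon\,\Tr(\cdot)\,g$ structure on the trace part; more precisely one checks that the full interior operator can be written as $\tfrac12\big(\Delta_L + \upsilon\,\Tr_g(\cdot)\,g + (2\kappa n\lambda + 2\Lambda)\big)$ up to the purely-conformal factor $\tfrac1{1+\kappa n}$ recorded in the formula $\mathcal{P}(ug + \mathring h) = \tfrac1{1+\kappa n}p(u)g + \mathring P(\mathring h)$. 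The boundary conditions are precisely the ADN$_\kappa$ conditions in the statement. This matching is the bookkeeping step; the point is that $D_h\mathcal F(0,0)$ \emph{is} (half of) the operator ``$\Delta_L + \upsilon\Tr(\cdot)g + c$ with ADN$_\kappa$ conditions'' with $c = 2\kappa n\lambda + 2\Lambda$, so by hypothesis $-c$ is not in its spectrum.

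For the Fredholm property I would argue that $D_h\mathcal{F}(0,0)$ differs from $\tfrac12 L_c$ of Proposition \ref{LcFredholm} only by the zero-order term $\tfrac{\kappa}{n(1+\kappa n)}\big((n-2)\Delta\Tr_g h - 2n\tau\Tr_g h\big)g$ in the interior and by the extra term $-\tfrac{(n-2)\kappa}{2(1+\kappa n)}d\Tr_g h$ in the Bianchi-type boundary component. The interior perturbation contains a second-order term $\Delta\Tr_g h$, so this is not a compact perturbation; instead I would check that adding $t$ times this term (and $t$ times the boundary perturbation) for $t\in[0,1]$ preserves ellipticity in the Agmon–Douglis–Nirenberg sense — the principal symbol on the trace part is altered from $|\xi|^2$ to $(1 + (n-2)\kappa t/(n(1+\kappa n)))|\xi|^2$, which stays nonzero precisely because $\kappa \neq -\tfrac1n$ and $\kappa\neq -\tfrac1{2(n-1)}$ (the excluded values are exactly where $1+2(n-1)\kappa$ or $1+n\kappa$ vanish, killing the symbol $p(u) = (1+2(n-1)\kappa)\Delta u + 2\Lambda u$ or making $\tau$-normalizations degenerate) — and that the complementing/Lopatinski–Shapiro condition is preserved along the homotopy, so the index is constant and equals $0$. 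Alternatively, and perhaps more cleanly, one invokes that $\Delta_L + \upsilon\Tr(\cdot)g + c$ with ADN$_\kappa$ conditions is itself an elliptic boundary value problem of ADN type (same verification as Anderson's, since the modifications are lower-order on the traceless part and the conformal and second-fundamental-form conditions are untouched), hence Fredholm of index zero by the ADN theory directly.

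Finally, injectivity: if $h\in\ker D_h\mathcal{F}(0,0)$ then $h$ satisfies $(\Delta_L + \upsilon\Tr_g(\cdot)g + c)h = 0$ in $M$ with the ADN$_\kappa$ boundary conditions, so $h$ lies in the kernel of this operator with those conditions; by hypothesis $-c$ is not in the spectrum, so $h = 0$. With index $0$ and trivial kernel, $D_h\mathcal{F}(0,0)$ is surjective as well, hence an isomorphism onto the stated product space. \textbf{The main obstacle} I anticipate is the Fredholm/ellipticity step: one must be careful that the interior perturbation is genuinely second-order and therefore \emph{not} a compact perturbation of $L_c$, so the index-$0$ conclusion cannot be gotten for free by compactness — it requires either a homotopy argument through elliptic ADN problems (monitoring that the symbol on the conformal direction never degenerates, which is exactly where the hypotheses $\kappa\neq -\tfrac1n, -\tfrac1{2(n-1)}$ enter) or a direct verification that the modified operator with ADN$_\kappa$ conditions satisfies the Lopatinski–Shapiro conditions. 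Getting the split-off of the conformal factor $\tfrac1{1+\kappa n}$ right, and checking that the boundary operator modification $-\tfrac{(n-2)\kappa}{2(1+\kappa n)}d\Tr_g h$ does not spoil the complementing condition, is the delicate bookkeeping at the heart of the argument.
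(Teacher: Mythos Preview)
Your overall architecture---Fredholm of index zero via homotopy from $L_c$, then injectivity from the spectral hypothesis---is exactly the paper's. But there is a genuine bookkeeping error at the heart of your identification. You assert that the interior component $D_h\mathcal F_1(0,0)$ \emph{is} (half of) the operator $\Delta_L+\upsilon\,\Tr_g(\cdot)\,g+c$; it is not. The formula computed just before the statement is
\[
D_h\mathcal F_1(0,0)h=\tfrac12\Delta_Lh+\tfrac{1}{1+\kappa n}\bigl(n\kappa\tau\,h+\Lambda h-\kappa\tau\,\Tr_gh\,g\bigr)-\tfrac{(n-2)\kappa}{2(1+\kappa n)}\nabla\nabla\Tr_gh,
\]
and the last term is a full Hessian, not a pure-trace term. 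Subtracting $\tfrac12\mathcal P h$ leaves precisely $-\tfrac{(n-2)\kappa}{2(1+\kappa n)}\mathring{\Hess}\,\Tr_gh$, which is traceless and second order. Equivalently, as the paper records, $D_h\mathcal F_1(0,0)(ug)=\tfrac12p(u)g-\tfrac{(n-2)n\kappa}{2(1+\kappa n)}\mathring{\Hess}\,u$: the action on the conformal direction has a nonzero traceless piece, so $D_h\mathcal F_1(0,0)$ does \emph{not} respect the splitting $\mathcal G\oplus\mathring{\mathcal S}_2$ and is not equal to $\tfrac12 P$ for any $P$ of the form $\Delta_L+\upsilon\,\Tr(\cdot)g+c$.

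The paper absorbs this by the ``it suffices'' reduction: in the decomposition $h=ug+\mathring h$ the interior map is block lower-triangular, with diagonal $\bigl(p(u),\mathring P(\mathring h)\bigr)$ and off-diagonal $\mathring{\Hess}\,u$ entering only the traceless slot. One then shows that the \emph{diagonal} operator $(p,\mathring P)$ with the same ADN$_\kappa$ boundary conditions is an isomorphism; this is done by repackaging it as $P(h)=\Delta_Lh+(2\kappa n\lambda+2\Lambda)h+\upsilon\,\Tr_gh\,g$, which splits as $P(ug+\mathring h)=\tfrac1{1+2(n-1)\kappa}p(u)g+\mathring P(\mathring h)$, is Fredholm of index zero by homotopy from $L_c$, and has trivial kernel by the spectral hypothesis. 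Your argument reaches the right endpoint (analysis of $P$) but skips the reduction that passes from the actual linearization, with its off-diagonal $\mathring{\Hess}$ term, to the split operator $P$. Without that step your injectivity claim does not follow: a kernel element of $D_h\mathcal F(0,0)$ satisfies $p(u)=0$ and $\mathring P(\mathring h)=\text{const}\cdot\mathring{\Hess}\,u$ under ADN$_\kappa$, which is not immediately the statement that $-c$ lies in the spectrum of $\Delta_L+\upsilon\,\Tr(\cdot)g$.
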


\begin{proof}
It suffices to show that the map from $C^{k+2,\alpha}(M)\times C^{k+2,\alpha}(M,\mathring{\mathcal S}_2)$ into $C^{k,\alpha}(M)\times C^{k,\alpha}(M,\mathring{\mathcal S}_2)\times C^{k+1,\alpha}(\partial M,\mathcal S_2)\times C^{k+2,\alpha}(\partial M,\mathring {\mathcal S}_2)\times C^{k+1,\alpha}(\partial M)$ given by
$$(u,\mathring h)\longrightarrow
\left\{
\begin{array}{ll}
(p(u),\mathring P(\mathring h)) &\text{in } M\\
B_g(h)-\frac{(n-2)\kappa}{2(1+\kappa n)} d\Tr_g h &\text{on } \partial M\\
h^T-\frac{1}{n-1}\Tr_{g^T} (h^T) g^T &\text{on } \partial M\\
D H (g)h &\text{on } \partial M\\
\end{array}
\right.,
$$ where $h=ug+\mathring h$, is an isomorphism.

In order to do so, we will show that the map  
$$(u,\mathring h)\longrightarrow
\left\{
\begin{array}{ll}
\left(\frac1{1+2(n-1)\kappa}p(u),\mathring P(\mathring h)\right) &\mbox{ in } M\\
B_g(h)-\frac{(n-2)\kappa}{2(1+\kappa n)} d\Tr_g h &\mbox{on } \partial M\\
h^T-\frac{1}{n-1}\Tr_{g^T} (h^T) g^T &\mbox{on } \partial M\\
D H (g)h &\mbox{on } \partial M\\
\end{array}
\right.,
$$ 
is one. The operator  

$$h\longrightarrow
\left\{
\begin{array}{ll}
P(h):=\Delta_Lh+2\kappa n\lambda h+2\Lambda h+\upsilon\Tr(h)g &\mbox{ in } M\\
B_g(h)-\frac{(n-2)\kappa}{2(1+\kappa n)} d\Tr_g h &\mbox{on } \partial M\\
h^T-\frac{1}{n-1}\Tr_{g^T} (h^T) g^T &\mbox{on } \partial M\\
D H (g)h &\mbox{on } \partial M\\
\end{array}
\right.,
$$ 
is clearly a Fredholm operator of index zero, like $L_c$, by homotopy, it is therefore an isomorphism since it has a trivial kernel. Moreover, for any function $u$ and any traceless symmetric two-tensor $\mathring h$,
$$
P(u g+\mathring h)=\frac1{1+2(n-1)\kappa}p(u)g+\mathring P(\mathring h),
$$
which proves the stated result.
\end{proof}

\begin{theorem}\label{theoinvEin}
Let $k\in\mathbb{N}\backslash\{0\}$, $\alpha\in(0,1)$, $\kappa\neq -\frac1n,-\frac1{2(n-1)}$ and $\Lambda\in\mathbb{R}$. Let $g$ be an Einstein metric such that $\Ein(g)$ is non-degenerate. Under the assumptions of Proposition \ref{isoEin}, we further assume that $-2\kappa n\lambda-2\Lambda$ is not in the spectrum of $\Delta_H$ with Dirichlet condition. Then, for every small $ e\in C^{k+2,\alpha}(M,\mathcal S_2)$, for every conformal class $[\gamma]$ close to $[g^T]$ in $ [C^{k+2,\alpha}(\partial M,S_2^+)]$, and every function $\mathcal H$ close to $H(g)$ in $C^{k+1,\alpha}(\partial M,\mathcal S_2)$, there exists a unique $h$ close to zero in 
$C^{k+2,\alpha}(M,\mathcal S_2)$ such that
$$
\left\{
\begin{array}{ll}
\Ein(g+h)=\Ein(g)+e &\text{in } M,\\
|(g+h)^T|^{\frac{-1}{n-1}}(g+h)^T = \gamma&\text{on }\partial M,\\
H(g+h)=\mathcal H &\text{on }\partial M,\\
\end{array}
\right.
$$
Moreover, the map $(e,[\gamma],\mathcal H)\mapsto h$ is smooth in a neighborhood of $(0,[g^T], H(g))$ in a neighborhood of zero between the corresponding Banach spaces. 
\end{theorem}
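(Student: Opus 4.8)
The plan is to run the DeTurck-type argument of Section~\ref{sec:ppal}, in the form used to prove Theorem~\ref{theoremRicContra}, for the operator $\Ein$ with the modified Bianchi operator $\mathcal B_g$ in place of $B_g$. First I observe that ``$\Ein(g)$ non-degenerate'' means exactly $\tau=(1+n\kappa)\lambda+\Lambda\neq0$, so that $\omega:=\tau^{-1}\mathcal B_{g+h}(E)$ and the map $\mathcal F$ of \eqref{defFEin} are well defined for small $h$. We already know $\mathcal F_1(0,0)=0$, and $D_h\mathcal F(0,0)$ was computed above; by Proposition~\ref{isoEin} it is an isomorphism onto $C^{k,\alpha}(M,\mathcal S_2)\times C^{k+1,\alpha}(\partial M,\mathcal S_2)\times C^{k+2,\alpha}(\partial M,\mathring{\mathcal S}_2)\times C^{k+1,\alpha}(\partial M)$. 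The implicit function theorem then gives, for $(e,[\gamma],\mathcal H)$ near $(0,[g^T],H(g))$, a unique small $h\in C^{k+2,\alpha}(M,\mathcal S_2)$ with $\mathcal F(h,e)=(0,0,\gamma,\mathcal H)$, depending smoothly on the data.

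The substance is to show that such an $h$ solves the ungauged system, i.e.\ that $\omega\equiv0$. Write $\Psi:=\Ein(g+h)-E$. Using $\Tr_{g+h}\Ein(g+h)=(1+n\kappa)R(g+h)+n\Lambda$ one rewrites the interior part of $\mathcal F_1$ before the gauge term as $\Psi-\tfrac{\kappa}{1+\kappa n}(\Tr_{g+h}\Psi)(g+h)$, so $\mathcal F_1(h,e)=0$ reads $\Psi-\tfrac{\kappa}{1+\kappa n}(\Tr_{g+h}\Psi)(g+h)=\mathcal L_g\omega$. I then apply $\mathcal B_{g+h}$ to this: by the contracted Bianchi identity $\mathcal B_{g+h}(\Ein(g+h))=0$, by definition $\mathcal B_{g+h}(E)=\tau\omega$, and $\mathcal B_{g+h}(f(g+h))=\tfrac{n-2}{2(1+\kappa n)}df$ for a function $f$; eliminating $\Tr_{g+h}\Psi$ by means of the $g+h$-trace of the same identity ($\Tr_{g+h}\Psi=(1+\kappa n)\Tr_{g+h}(\mathcal L_g\omega)$) and writing $\mathcal B_{g+h}=B_{g+h}-\tfrac{(n-2)\kappa}{2(1+\kappa n)}d\Tr_{g+h}$, the two $d\Tr$ terms cancel and one is left with
$$
B_{g+h}(\mathcal L_g\omega)+\tau\omega=0 .
$$
At $h=0$, using that $g$ is Einstein and $2B_g\mathcal L_g=\Delta-\Ric$ on $1$-forms, this operator equals $\tfrac12(\Delta_H+2\kappa n\lambda+2\Lambda)$; since $-2\kappa n\lambda-2\Lambda$ is assumed not in the Dirichlet spectrum of $\Delta_H$, Corollary~\ref{DeltaHiso} makes it an isomorphism $C^{k+1,\alpha}_0(M,\mathcal T_1)\to C^{k-1,\alpha}(M,\mathcal T_1)$, and for $h$ small the perturbed operator $\omega\mapsto B_{g+h}(\mathcal L_g\omega)+\tau\omega$ stays injective under the Dirichlet condition, exactly as in the proof of Proposition~\ref{sol}. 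The second line of $\mathcal F(h,e)=(0,0,\gamma,\mathcal H)$ forces $\omega|_{\partial M}=0$, hence $\omega=0$.

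With $\omega=0$, $\mathcal F_1(h,e)=0$ becomes $\Psi-\tfrac{\kappa}{1+\kappa n}(\Tr_{g+h}\Psi)(g+h)=0$; taking the trace and using $1+\kappa n\neq0$ gives $\Tr_{g+h}\Psi=0$, hence $\Psi=0$, i.e.\ $\Ein(g+h)=\Ein(g)+e$, while the last two components of $\mathcal F(h,e)=(0,0,\gamma,\mathcal H)$ give $[(g+h)^T]=[\gamma]$ and $H(g+h)=\mathcal H$. For uniqueness and smooth dependence, I note conversely that if $g+h$ with $h$ small solves the displayed system then $\mathcal B_{g+h}(E)=\mathcal B_{g+h}(\Ein(g+h))=0$, hence $\omega=0$ and $\mathcal F(h,e)=(0,0,\gamma,\mathcal H)$, so $h$ is the function furnished by the implicit function theorem.

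I expect the one delicate point to be the cancellation in the second paragraph: one has to keep track of the several $\tfrac{(n-2)\kappa}{2(1+\kappa n)}d\Tr$ contributions — those distinguishing $\mathcal B$ from $B$, and the one produced by the $\tfrac{\kappa}{1+\kappa n}(\Tr_{g+h}\Psi)(g+h)$ correction in $\mathcal F_1$ — and to check that they combine to leave precisely $B_{g+h}(\mathcal L_g\omega)+\tau\omega$, whose value at $h=0$ is the shifted Hodge Laplacian named in the hypothesis. The choice of $\mathcal L_g$ rather than $\mathcal L_{g+h}$ in the gauge term is what guarantees that this operator is a genuine small perturbation of its $h=0$ model, which is all the injectivity-persistence step needs; the rest is a faithful transcription of Section~\ref{sec:ppal}.
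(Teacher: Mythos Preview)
Your proof is correct and follows essentially the same argument as the paper: invoke Proposition~\ref{isoEin} for the implicit function theorem, then kill the gauge by deriving $B_{g+h}\mathcal L_g\omega+\tau\omega=0$ and using the Dirichlet Hodge--Laplacian hypothesis with Corollary~\ref{DeltaHiso}. The only differences are presentational --- the paper applies $B_{g+h}$ directly to $\mathcal F_1(h,e)=0$, observing in one line that $B_{g+h}$ of the non-gauge part equals $-\mathcal B_{g+h}(E)$, whereas you route through the $\Psi$-rewriting and $\mathcal B_{g+h}$ to reach the identical equation --- and you spell out the uniqueness step (solutions of the ungauged system satisfy $\mathcal B_{g+h}(E)=0$, hence solve the gauged one) that the paper leaves implicit.
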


\begin{proof}
From the hypotheses, $D_h\mathcal F(0,0)$ is an isomorphism.
The preceding calculations and the implicit function theorem then imply that for $( e,\gamma,\mathcal H)$ close to $(0,\gamma,\mathcal H)$ in their respective spaces, there exists $h$ close to zero in 
$C^{k+2,\alpha}(M,\mathcal S_2)$ such that
$$
\mathcal F(h,e)=(0,0,\gamma,\mathcal H).
$$
We now apply $B_{g+h}$ to the equation $ \mathcal F_1(h,e)=0$, thus
$$
B_{g+h}\mathcal F_1(h,e)=-\mathcal B_{g+h}(E)-B_{g+h}\mathcal L_{g}\tau^{-1}\mathcal B_{g+h}(E)=0.
$$
Setting $\omega=\tau^{-1}\mathcal B_{g+h}(E)$, we then have that $\omega$ vanishes at the boundary and
$$
P_{g+h}\omega :=B_{g+h}\mathcal L_{g}\omega+\tau\omega=0.
$$
But by hypothesis, the operator
$$
P_g=\frac12(\Delta-\Ric_g)+\tau=\frac12(\Delta +\Ric_g+2\kappa n\lambda+2\Lambda )=\frac12(\Delta_H +2\kappa n\lambda+2\Lambda )
$$
with Dirichlet boundary condition
is injective, so if $h$ is small, $P_{g+h}$ remains injective, hence $\omega=0$. 

\end{proof}

\section{Riemann-Christoffel Type Curvature}
We would like, just as in \cite{Delay:etude}, to show that the image of certain Riemann-Christoffel type operators are submanifolds in $C^\infty$, in the neighborhood of the metric $g$.
 We thus define:
$$
{\mathcal Ein}(g) = \Riem(g) + g {~\wedge \!\!\!\!\!\bigcirc ~} (a\Ric(g) + bR(g)g + cg),
$$
where ${~\wedge \!\!\!\!\!\bigcirc ~}$ is the Kulkarni-Nomizu product (\cite{Besse} p. 47), and
$$c = \frac{1+(n-2)a}{2(n-1)}\Lambda,\;\;b = \frac{\kappa[1+a(n-2)]-a}{2(n-1)},\;\;a \neq -\frac{1}{n-2}.$$ 

We then obtain:
$$
\Tr_g{\mathcal Ein}(g) = [a(n-2)+1]\Ein(g).
$$
We define the Riemann-Christoffel type version of ${\mathcal Ein}(g)$ by:
$$
[g^{-1}{\mathcal Ein}(g)]^i_{klm} := g^{ij}{\mathcal Ein}(g)_{jklm}.
$$

Consider ${\mathcal R}^1_3$, the subspace of ${\mathcal T}^1_3$ consisting of tensors satisfying:
$$
\tau^i_{ilm} = 0,\;\tau^i_{klm} = -\tau^i_{kml},\;
\tau^i_{klm} + \tau^i_{mkl} + \tau^i_{lmk} = 0.
$$
We define the Fréchet space:
$$C^{\infty} = \cap_{k \in \mathbb{N}}C^{k,\alpha},$$
endowed with the family of seminorms $\{\|.\|_{k,\alpha}\}_{k \in \mathbb{N}}$.

We then proceed similarly to \cite{Delay:etude} to prove that:

\begin{theorem}
Under the conditions of Theorem \ref{theoinvEin}, the image of the map:
$$
\begin{array}{lll}
C^{\infty}(M,\mathcal{S}_2) &\longrightarrow& C^{\infty}(M,\mathcal{R}_3^1)
\times [ C^{\infty}(\partial M,S_2^+)]\times C^{\infty}(\partial M,\mathcal{S}_2)\\
h &\mapsto &
\left\{
\begin{array}{ll}
(g+h)^{-1}{\mathcal Ein}(g+h)-(g)^{-1}{\mathcal Ein}(g) &\text{in } M,\\
|(g+h)^T|^{-\frac{1}{n-1}}(g+h)^T &\text{on }\partial M,\\
H(g+h)&\text{on }\partial M,\\
\end{array}
\right.
\end{array}
$$
is a smooth submanifold of  a neighborhood of $(0,[g^T],H(g))$.
\end{theorem}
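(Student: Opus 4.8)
The plan is to deduce this from the Einstein-type inversion of Theorem~\ref{theoinvEin}, using that the Ricci-type contraction of ${\mathcal Ein}$ returns $\Ein$. Write $\Phi$ for the map in the statement, with components $\Phi_1,\Phi_2,\Phi_3$. First I would fix the pointwise $g$-orthogonal decomposition $\mathcal R^1_3=\mathcal W\oplus(g{~\wedge \!\!\!\!\!\bigcirc ~}\mathcal S_2)$ of curvature-type $(1,3)$-tensors into a Weyl part and a ``Ricci-type'' part, let $\pi_{\mathcal W}$ be the projection onto $C^\infty(M,\mathcal W)$, and introduce the metric-free contraction $\mathcal C(\tau)_{km}=\tau^{j}{}_{kjm}$. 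For $n\ge3$, $\mathcal C\equiv0$ on $\mathcal W$ while $\mathcal C$ restricts to an isomorphism of $g{~\wedge \!\!\!\!\!\bigcirc ~}\mathcal S_2$ onto $\mathcal S_2$ (it sends $g{~\wedge \!\!\!\!\!\bigcirc ~}S$ to $(n-2)S+(\Tr_gS)g$), so $\ker\pi_{\mathcal W}\cap\ker\mathcal C=\{0\}$. Since ${\mathcal Ein}(\gbar)=\Riem(\gbar)+\gbar{~\wedge \!\!\!\!\!\bigcirc ~}(\cdots)$ for every metric $\gbar$, the Weyl part of $\gbar^{-1}{\mathcal Ein}(\gbar)$ is the $(1,3)$ Weyl tensor of $\gbar$, while $\mathcal C(\gbar^{-1}{\mathcal Ein}(\gbar))=c_0\,\Ein(\gbar)$ with $c_0=a(n-2)+1\neq0$, by the trace identity recalled before the statement. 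Putting $T:=c_0^{-1}\mathcal C$, this gives $T(\Phi_1(h))=\Ein(g+h)-\Ein(g)$ for all small $h$.

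Next I would record the solution operator of Theorem~\ref{theoinvEin}: for each $k$, on a neighbourhood of $(0,[g^T],H(g))$ the data $(e,[\gamma],\mathcal H)$ has a unique small solution $h\in C^{k+2,\alpha}(M,\mathcal S_2)$ of $\Ein(g+h)=\Ein(g)+e$, $[(g+h)^T]=[\gamma]$, $H(g+h)=\mathcal H$, depending smoothly on its arguments. By uniqueness these agree across $k$, and the usual tame elliptic estimates assemble them into a smooth map of Fréchet spaces $(e,[\gamma],\mathcal H)\mapsto h(e,[\gamma],\mathcal H)$ with $h(0,[g^T],H(g))=0$ — the passage to $C^\infty$ being carried out as in \cite{Delay:etude}.

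Then I would identify the image of $\Phi$. Writing $h_\rho:=h(T\rho,[\gamma],\mathcal H)$ on a neighbourhood of $(0,[g^T],H(g))$ in the target, I claim this image is exactly $\{\Psi=0\}$, where $\Psi(\rho,[\gamma],\mathcal H):=\pi_{\mathcal W}(\rho)-\pi_{\mathcal W}\big((g+h_\rho)^{-1}{\mathcal Ein}(g+h_\rho)-g^{-1}{\mathcal Ein}(g)\big)\in C^\infty(M,\mathcal W)$. Indeed, if $(\rho,[\gamma],\mathcal H)=\Phi(h_0)$ then $T\rho=\Ein(g+h_0)-\Ein(g)$, so $h_\rho=h_0$ by uniqueness and $\Psi=\pi_{\mathcal W}(\rho-\Phi_1(h_0))=0$; conversely, if $\Psi=0$ then $\Phi_1(h_\rho)$ and $\rho$ share their Weyl part and their $\mathcal C$-contraction ($=c_0T\rho$), hence coincide by the injectivity above, while $\Phi_2(h_\rho)=[\gamma]$, $\Phi_3(h_\rho)=\mathcal H$ by construction, so $(\rho,[\gamma],\mathcal H)=\Phi(h_\rho)$. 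Moreover $\Psi$ is a submersion at $(0,[g^T],H(g))$: its differential is $\pi_{\mathcal W}+A\circ\pi'$, with $\pi'$ the complementary projection onto the non-Weyl directions and $A$ smooth (coming from differentiating $h_\rho$ and then $\pi_{\mathcal W}(\gbar^{-1}{\mathcal Ein}(\gbar))$), and since $T$ kills the Weyl directions of $\dot\rho$ so does $A$; thus the differential is surjective with complemented kernel. Equivalently, writing $\rho=\pi_{\mathcal W}(\rho)+\rho'$ with $\rho'\in C^\infty(M,g{~\wedge \!\!\!\!\!\bigcirc ~}\mathcal S_2)$, the image is the graph over $(\rho',[\gamma],\mathcal H)$ of the smooth ``Weyl-part'' map, and the graph of a smooth map of Fréchet manifolds is a smooth submanifold.

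I expect the only non-routine points to be: (i) the contraction identity $\mathcal C(\gbar^{-1}{\mathcal Ein}(\gbar))=c_0\,\Ein(\gbar)$ and the injectivity of $\mathcal C$ on $g{~\wedge \!\!\!\!\!\bigcirc ~}\mathcal S_2$, which is exactly what lets Theorem~\ref{theoinvEin} be quoted as a black box and makes $\Psi$ a submersion; and (ii) upgrading the Hölder statement of Theorem~\ref{theoinvEin} to a smooth map of $C^\infty$ Fréchet spaces, done as in \cite{Delay:etude}. No diffeomorphism-equivariance argument is needed here: the conformal-class and mean-curvature components already remove the gauge freedom.
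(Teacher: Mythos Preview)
Your proposal is correct and follows exactly the scheme the paper intends: the paper gives no proof here, only a pointer to \cite{Delay:etude}, whose method is precisely to contract the $(1,3)$ operator back to the already--inverted Ricci/Einstein--type operator and then exhibit the image as a graph over the ``Ricci--type'' factor, with the passage to $C^\infty$ handled by uniqueness across H\"older scales and the tame estimates of that reference.

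One small remark: your parenthetical claim that ``the Weyl part of $\gbar^{-1}{\mathcal Ein}(\gbar)$ is the $(1,3)$ Weyl tensor of $\gbar$'' is not literally correct for $\gbar\neq g$, since your projection $\pi_{\mathcal W}$ is taken along the $g$--dependent complement $g^{-1}(g{~\wedge \!\!\!\!\!\bigcirc ~}\mathcal S_2)$ rather than $\gbar^{-1}(\gbar{~\wedge \!\!\!\!\!\bigcirc ~}\mathcal S_2)$. This does not matter, because you never actually use that claim; your argument relies only on the metric--independent contraction identity $\mathcal C(\gbar^{-1}{\mathcal Ein}(\gbar))=c_0\,\Ein(\gbar)$ and on $\ker\pi_{\mathcal W}\cap\ker\mathcal C=\{0\}$, both of which are fine. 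You may simply delete the Weyl--tensor remark.
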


\def\polhk#1{\setbox0=\hbox{#1}{\ooalign{\hidewidth
  \lower1.5ex\hbox{`}\hidewidth\crcr\unhbox0}}}
  \def\polhk#1{\setbox0=\hbox{#1}{\ooalign{\hidewidth
  \lower1.5ex\hbox{`}\hidewidth\crcr\unhbox0}}} \def\cprime{$'$}
  \def\cprime{$'$} \def\cprime{$'$} \def\cprime{$'$}
\providecommand{\bysame}{\leavevmode\hbox to3em{\hrulefill}\thinspace}
\providecommand{\MR}{\relax\ifhmode\unskip\space\fi MR }
\providecommand{\MRhref}[2]{%
  \href{http://www.ams.org/mathscinet-getitem?mr=#1}{#2}
}
\providecommand{\href}[2]{#2}

\end{document}